\documentclass[10pt,reqno]{amsart}
\usepackage{amsmath, latexsym, amsfonts, amssymb,amsthm, amscd,epsfig,enumerate}
\usepackage{subfigure,color, float, caption}
\usepackage{tikz,colortbl}
\usetikzlibrary{calc}
\usetikzlibrary{decorations.pathreplacing}

\advance\hoffset -.75cm

\definecolor{refkey}{gray}{.75}
\definecolor{labelkey}{gray}{.75}

\newcommand{\N}{\mathbb N}

\newcommand{\pr}{\mathbb P}

\newtheorem{Theorem}{Theorem}[section]

\newtheorem{Corollary}[Theorem]{Corollary}

\newtheorem{Definition}[Theorem]{Definition}
\newtheorem{Example}[Theorem]{Example}

\title{Branching random walks with ageing}

\author[D.~Bertacchi]{Daniela Bertacchi}
\address{D.~Bertacchi, Dipartimento di Matematica e Applicazioni,
Universit\`a di Milano--Bicocca,
via Cozzi 53, 20125 Milano, Italy.}
\email{daniela.bertacchi\@@unimib.it}

\author[E.~Montanaro]{Elena Montanaro}
\address{E.~Montanaro, Dipartimento di Scienze Statistiche,
Sapienza Universit\`a di Roma,
Piazzale Aldo Moro 5, 00185 Roma, Italy.}
\email{elena.montanaro\@@uniroma1.it}

\author[F.~Zucca]{Fabio Zucca}
\address{F.~Zucca, Dipartimento di Matematica,
Politecnico di Milano,
Piazza Leonardo da Vinci 32, 20133 Milano, Italy.}
\email{fabio.zucca\@@polimi.it}

\date{}

\begin{document}

\begin{abstract}
Branching processes are models used to describe populations that reproduce and die over time. In the classical setting, an individual's reproductive capacity remains constant throughout its lifetime. However, in real-world situations, reproductive capacity typically undergoes ageing - that is, after reaching a peak, it decreases over time. In this work, we study the influence of ageing on the behaviour of the process and how
modifying its parameters, along with reproduction rates, affects the destiny of the process.
\end{abstract}

\maketitle
\noindent {\bf Keywords}:branching random walk; branching process; ageing; critical parameters; local survival; global survival; pure global survival phase

\noindent {\bf AMS subject classification}: 60J05, 60J80.

\section{Introduction}
\label{sec:intro}

Branching processes have been proposed, since the earlier studies (see for example \cite{cf:GW1875, cf:kendall60, cf:feller59}),
as models for populations where individuals randomly breed and die. These processes also have a long tradition (see \cite{cf:Griff73, cf:Clancy98}) in epidemic modelling.
The stochastic process represents, in these two situations, the total number of individuals of the population and the total number of infected individuals, respectively. 

Already in the 1970s the terminology \textit{branching random walks} appeared in the literature (see \cite{cf:ath-73,
 cf:joffe73, cf:asm76-1, cf:asm76-3}), denoting branching processes where individuals are endowed with a position and their position is
 chosen at random, given the position of their parent, according to the law of a random walk. 
 Branching random walks are a generalisation of branching processes and the addition of an infinite space set
 motivates further questions. For example, while branching processes have only two phases (indefinite growth or extinction),
 branching random walks add the distinction between local and global extinction and between local and global survival. The population may go extinct in any finite set, while still surviving as a whole (for details, see Definitions \ref{def:survival} and \ref{def:critical}).
 
 One of the most popular types of branching random walk is the continuous-time process where individuals have an 
 exponentially distributed lifespan and, during their lifetime, produce offspring at the arrival times of a homogeneous Poisson
 process with rate $\lambda>0$ (see Definition \ref{def:contBRW}).
  We refer to this process as the \textit{classical} continuous-time branching random walk.
 In the case of population modelling, $\lambda$ represents the reproductive capacity, 
 while in epidemic modelling, it represents transmissibility.
  Clearly, the larger $\lambda$ is, the more likely it is for the population to survive, and it is
 possible to define at least two critical parameters $\lambda_w\le\lambda_s$ (see Definition \ref{def:critical}).
 The identification of these two parameters has an applicative relevance, since if $\lambda<\lambda_w$, then the population goes
 globally extinct and if $\lambda<\lambda_s$, then the population goes  locally extinct.
 For results on the behaviour and the critical parameters of branching random walks, see for example
\cite{cf:Ligg1, cf:MachadoMenshikovPopov, cf:MP03, cf:Muller08-2, cf:PemStac1, cf:Stacey03}.

In real situations, an individual’s reproductive capacity, as well as the infectiousness of an individual affected by a transmissible disease, is not constant over time. Instead, it decreases with age or with time since infection. It is therefore natural to introduce ageing branching random walks, in which reproduction occurs at the arrival times of an inhomogeneous Poisson process with a decreasing rate function
 (see Definition \ref{def:ageBRW}).
We note that age-dependent branching processes have been investigated, for example, in \cite[Chapter IV]{cf:AthNey}
and \cite[Chapter VI]{cf:Harris63}, but, in those studies, age dependence means that the lifespan is not exponentially distributed; therefore, the probability of dying during a given time interval depends on age. In our framework, by contrast, the lifetime is exponentially distributed, as in the classical case, while the ability to reproduce decreases over time.
This definition is inspired by the concept of \textit{dissipation}, which has been studied in interacting particle systems, such as
spin models \cite{cf:daipra2, cf:daipra1} and contact processes \cite{cf:daipra3}.
In those papers dissipation affects the time between updates in the system: the update rate decreases over time, and has been shown to produce rhythmic oscillations in macroscopic quantities.

In the present paper, we focus on ageing branching random walks with a breeding rate of the form $\lambda e^{-\alpha t}$,
where $t$ represents the age of the parent. In particular, we prove results on the local and global critical parameters 
(Theorems \ref{th:locsurv-age}, \ref{th:suff-gl-ext-age}, \ref{th:glcrit}). In Section \ref{sec:crit-mod} we study how local modifications
of the rates affect the critical parameters, and compare with classical branching random walks with the same expected number of
offspring. We prove that in many cases (for example in transitive processes), survival or extinction of ageing branching random walks
 depends only on the  expected number of offspring. Nevertheless, the number of individuals, alive at time $t\ge0$, is a function that
 depends on the whole distribution of the process (not only on the expected number of offspring). We determine the behaviour, over time,
  of the  expected number of individuals in a branching process with ageing (Theorems \ref{th:ode}, \ref{th:compare}).
  This behavior differs from that of a process without ageing. This indicates that, to determine whether the process will survive or go extinct, it must be observed over a sufficiently long period of time. In Section  \ref{sec:concl}, we discuss our results and their implications for applications in species conservation, pest control, and epidemic control.


\section{Branching random walks: with or without ageing}
\label{sec:basic}

Given an at most countable  set of locations (or types) $X$, a \textit{branching random walk} (briefly BRW) is 
 a process $\{\eta_t\}_{t \in T}$,  where $\eta_t\in X^\N$.
 In the particular case where $X$ is a singleton, the process is named \textit{branching process}.
  For any given $t\in T$, $x\in X$, the value $\eta_t(x)$ represents the number of individuals 
in $x \in X$ (or of type $x$) alive at time $t$. Time can be discrete or continuous, i.e. $T\subseteq \N$ or
$T\subseteq[0,+\infty)$, respectively. The laws of the process, in continuous-time, discrete-time and with ageing, are
described in Sections \ref{subsec:continuous}, \ref{subsec:discrete} and \ref{subsec:ageing}, respectively.

We consider initial configurations with only one individual placed at a fixed site $x$ and we
denote by $\pr^{x}$ the law of the corresponding process. The evolution of the process
with more complex initial conditions can be obtained by superimposition, since reproductions and death of different individuals are
independent.
\begin{Definition}\label{def:survival} 
Given a process $\{\eta_t\}_{t\in T}$, with $\eta_t\in X^\N$, let $x\in X$ and
$A \subseteq X$.
\begin{enumerate}
 \item 
The process \textsl{survives} in $A$, with positive probability,  starting from $x$, if
$
{\mathbf{q}}(x,A)
:=1-\pr^{x}(\limsup_{t \to \infty} \sum_{y \in A} \eta_t(y)>0)<1.
$
If $A=\{x\}$, we say that the process \textsl{survives locally}, with positive probability, starting from $x$.
\item 
The process \textsl{goes extinct} almost surely in $A$, starting from $x$, if
$
{\mathbf{q}}(x,A) =1$.
\item
The process \textsl{survives globally}, with positive probability, starting from $x$, if
$
{\mathbf{q}}(x,X) 
<1$.
\item
The process \textsl{goes globally extinct}, almost surely, starting from $x$, if
$
{\mathbf{q}}(x,X) =1$.
\item
There is \textsl{strong survival} in $A$, starting from $x$, if
$ 
 {\mathbf{q}}(x,X)=
{\mathbf{q}}(x,A)<1
$ 
and \textsl{non-strong survival} in $A$, if $ {\mathbf{q}}(x,X)<{\mathbf{q}}(x,A)<1$.
\item
The process is in a \textsl{pure global survival phase}, starting from $x$, if
$ 
 {\mathbf{q}}(x,X)<{\mathbf{q}}(x,x)=1
$ 
(we write ${\mathbf{q}}(x,y)$ instead of ${\mathbf{q}}(x, \{y\})$ for all $x,y \in X$).
\end{enumerate}
\end{Definition}
Survival and extinction can be defined for single realisations of the process as well.
For example, if, given $x\in X$, $\{\eta_t(x,\omega)\}_{t\ge0}$, is strictly positive at arbitrarily large times, we say that 
for this $\omega$ there is local survival in $x$; if it is equal to zero for all sufficiently large $t$, we say that there is extinction in $x$.
We stress that in discussions about survival, the qualifier “with positive probability” is often implicitly understood. Similarly, in the case of extinction, the phrase “almost surely” is often omitted.

Strong survival in $A$ means that survival in $A$ happens with 
positive probability, and for almost all realisations, the process either survives in $A$ (hence globally) or goes extinct. More precisely,
there is strong survival in $y$, starting from $x$, if and only if the probability
of survival in $y$, starting from $x$, is positive and, conditioned on global survival, is equal to $1$.

On the other hand, negation of strong survival in $A$ means that either $ 
 {\mathbf{q}}(x,X)=1$ (almost sure global extinction) or $ 
 {\mathbf{q}}(x,X)<{\mathbf{q}}(x,A)
$ (positive probability of global survival, without survival in $A$).

The probabilities of extinction clearly depend on the law of the BRW.
In the classical case, the process is Markovian, since transitions depend only on the number and location of the individuals.
We remark that this is not the case when reproductive capacity is affected by age.
We first recall the definition of the classical BRW, in continuous or discrete time.

\subsection{Continuous-time Branching Random Walks}
\label{subsec:continuous}

Classical continuous-time BRWs are assigned by the choice of the space $X$ and of a  nonnegative matrix
$K=(k_{xy})_{x,y \in X}$.
Each individual has an exponentially distributed lifetime with parameter 1 (death occurs at rate 1). 
Fix $\lambda>0$: for each individual living in $x$, and each $y\in X$, such that $ k_{xy}>0$,
we associate a Poisson process with
parameter $\lambda k_{xy}$, which represents the breeding clock.
At each arrival time of this Poisson process, provided that the parent at $x$ is still alive, a newborn is placed at $y$.
The death and reproduction clocks associated with different individuals are independent.
We note that  the reproduction rate, $\lambda k_{xy}$, is the product of the basic rate $ k_{xy}$
and $\lambda$. We can view $\lambda$ as a knob that regulates the overall speed of reproduction (due to
abundance or lack of resources, the presence of a good climate, etc.), and $ k_{xy}$ as the local proclivity for reproduction.
We summarize the definition of continuous-time BRW in the following.
\begin{Definition}
\label{def:contBRW}
Given an at most countable set $X$ and  a  nonnegative matrix
$K=(k_{xy})_{x,y \in X}$, we denote by  $(X,K)$ the family of continuous-time BRWs, indexed by $\lambda>0$, where
each individual dies at rate 1 and individuals living in $x\in X$, produce offspring and send them to $y\in X$, at rate $\lambda k_{xy}$.
All breeding and death events, for different individuals, are independent. 
\end{Definition}

The law of the process is fully described by the set $X$, the matrix $K$ and the parameter $\lambda$.
Different values of $\lambda$ correspond to different processes in the same
family: $\lambda$ represents the speed of the single process.
With a slight abuse of notation, we often say that the family $(X,K)$ is a continuous-time BRW, rather than a family of BRWs.
We stress that, even if other processes that model independent births and deaths may be constructed in
continuous time, when we refer to \textit{continuous-time BRWs}, we assume that the process is a BRW $(X,K)$, constructed
via time-homogeneous Poisson processes.

The set $X$ is endowed with a graph structure $(X,E)$, induced by the matrix $K$
($E\subseteq X\times X$ being the set of edges).
Given $x,y\in X$, $(x,y)\in E$ if and only if $k_{xy}>0$. 
In other words, there is an edge from $x$ to $y$ whenever an individual at $x$ has
a positive probability of placing some offspring in $y$.
 We say that there is a path from $x$ to $y$, and we write $x \to y$, if it is possible to find a finite
sequence $ \{x_i\}_{i=0}^n$, for some $n \in \mathbb{N}$, such that $x_0 = x$, $x_n = y$ and $(x_i, x_{i+1}) \in E$
for all $i = 0, \ldots, n-1$ (observe that by definition we assume that there is always a path of length $0$ from $x$ to itself).
Whenever $x \to y$ and $y \to x$ we write $x \rightleftharpoons y$.
The equivalence relation $\rightleftharpoons $ induces a partition of $X$: the class $[x]$ of $x$ is called \emph{irreducible class} of $x$. If there is only one irreducible class, the process $(X,K)$ is said to be irreducible.

From the matrix of reproduction rates $K$ we also derive the 
\textit{first-moment matrix} $M=(m_{xy})_{x,y \in X}$, where each entry
$m_{xy}$ represents the expected number of children that an individual living
at $x$ sends to $y$ during its lifetime.
In continuous-time BRW, $m_{xy}=\lambda k_{xy}$. Note that $(x,y)\in E$ if and only if $m_{xy}>0$. 

We note that, in the definition of continuous-time BRWs $(X,K)$, the reproduction rates along the edge $(x,y)$ are given by
$\lambda k_{xy}$,
while it is implicitly assumed that the death rate is equal to 1 at each site.
 It is not difficult to see that the introduction
of a nonconstant death rate $\{d(x)\}_{x \in X}$ (in place of rate 1 at each site), does not represent a
significant generalizasion. Indeed,
 one can study the BRW with death rate 1 and
reproduction rates $\{\lambda k_{xy}/d(x)\}_{x,y \in X}$; the two processes have the same behaviours in
terms of survival and extinction (\cite[Remark 2.1]{cf:BZ14-SLS}). Hence, it suffices to study the behaviour of continuous-time BRWs with death rate equal to 1 at each site.

The process is monotone in $\lambda$: larger parameters imply faster breeding.
To be precise,
 given $\lambda_1<\lambda_2$,
we can couple the two corresponding processes, in a way such that the one with $\lambda_1$ 
has in all sites, and at all times,  almost surely,
 a number of individuals that does not exceed the corresponding number for the process with speed
$\lambda_2$. We note that the extinction probabilities ${\mathbf{q}}(\cdot,\cdot)$ depend on the choice of $\lambda$.
This leads to the definition of critical values of $\lambda$ (for a generalisation and further properties, see \cite{cf:BZ-critical26}).
\begin{Definition}\label{def:critical}
Let  $(X,K)$ be a continuous-time BRW. For any $x \in X$, there are two critical parameters: the \textsl{global} 
\textsl{survival critical parameter} $\lambda_w(x)$ and the  \textsl{local} 
 \textsl{survival critical parameter} $\lambda_s(x)$ (or, briefly, global and local critical parameters), defined as
\[ 
\begin{split}
\lambda_w(x)&:=
\inf \left\{\lambda>0\colon \,
 {\mathbf{q}}(x,X)<1
\right\}
,\\
\lambda_s(x)&:=
\inf\left\{\lambda>0\colon \,
\mathbf{q}(x,x)<1\right\}.
\end{split}
\] 
If  the
interval $(\lambda_w(x),\lambda_s(x))$ is not empty,
we say that there exists a \textsl{pure global survival phase} starting from $x$.
\end{Definition}
These values depend only on the irreducible class of $x$; in particular, they do not depend on $x$
if the BRW is irreducible. In the irreducible case we simply write $\lambda_w$ and $\lambda_s$.
Moreover, we say that the process is 
\textit{globally supercritical}, \textit{critical} or \textit{subcritical}
if $\lambda>\lambda_w$, $\lambda=\lambda_w$ or $\lambda<\lambda_w$ respectively.
An analogous definition of \textit{locally supercritical}, \textit{critical} or \textit{subcritical}
is given using $\lambda_s$ instead of $\lambda_w$.
No reasonable definition of a \textit{strong local survival critical parameter} is possible
(see \cite{cf:BZ14-SLS}).

The local survival critical parameter $\lambda_s(x)$ can be computed, thanks to a result that links it to the first-return generating function
$\Phi(x,x|\lambda)$.
We recall here the definition of the coefficients and of $\Phi$.
Let  
\begin{equation}\label{eq:defPhi}
\begin{split}
    \varphi^{(n)}_{xy}&:=\sum_{x_1,\ldots,x_{n-1} \in X \setminus\{y\}} k_{x x_1} k_{x_1 x_2} \cdots k_{x_{n-1} y}\\
    \Phi(x,y|\lambda)&:=\sum_{n =1}^\infty \varphi_{xy}^{(n)} \lambda^n.
\end{split}
\end{equation}
One can see the similarity between the coefficients $\varphi^{(n)}_{xy}$
and the first-arrival probabilities (or first-return, in case $x=y$) 
see~\cite{cf:Woess09} (Section 1.C).
In particular, $\lambda^n \varphi^{(n)}_{xy}$ is
the expected number of individuals alive at $y$ at time $n$,
when the initial state is just one individual at $x$ and the
process behaves like the original BRW except that every individual reaching
$y$ at any time $i <n$ is immediately killed (before breeding).
The following characterisation holds (see~\cite{cf:BZ2}):
\begin{equation}
\label{eq:lambdas1}
\lambda_s(x)=
\max\{ \lambda 
\in {\mathbb R}:\Phi(x,x|\lambda)\leq 1\}=
\sup 
\{ \lambda 
\in {\mathbb R}:\Phi(x,x|\lambda)< 1\}.
\end{equation}
Unfortunately, the characterisation of the global survival critical parameter is  far less explicit, and we are not going to use it here.
We refer the interested reader to \cite{cf:BZ2}.

\subsection{Discrete-time Branching Random Walks}
\label{subsec:discrete}

In discrete-time BRWS, individuals  live one unit of time; at their death, they are replaced by their offspring.
The number of children and their locations depend only on the location of the parent.
In order to give a formal description, 
\begin{Definition}
\label{def:discreteBRW}
Let $X$ be an at most countable set and let 
 $S_X := \{f \colon X \to \N : \sum_y f(y) < \infty \}$.
 Consider a family $\mu = \{\mu_x\}_{x \in X}$ of probability measures on the (countable) measurable space $(S_X, 2^{S_X})$.
 Denote by $(X, \mu)$ the discrete-time process in which each individual lives one unit of time and at their death, they are
 replaced by $f(y)$ individuals at $y$, for all $y \in X$.
The function $f$ is chosen according to $\mu_x$, where $x$ is the location of the parent.
All breeding and death events, for different individuals, are independent. 
\end{Definition}


In the discrete-time case,  the  first-moment matrix $M = (m_{xy})_{x,y \in X}$, has entries $ m_{xy} := \sum_{f \in S_X} f(y) \mu_x(f)$.
 For simplicity, we require $ \sup_{x \in X}{\sum_{y \in X}{m_{xy}} < + \infty}$.
 The graph structure $(X,E)$ is analogous to the continuous-time case: $(x,y)\in E$ if and only if
$m_{xy}>0$.
  In order to avoid trivial situations where individuals have one offspring almost surely, we also
 assume that in every equivalence class (with respect to $\rightleftharpoons$) there is at least one vertex
where an individual can have, with positive probability, a number of children different from $1$.

We observe that, in contrast with the continuous-time case, here the number of children and their locations are not 
necessarily independent variables. For example, for any $X$ with at least three distinct sites $y,w,z$, let $\delta_{i}(j)=1$ if $i=j$, $\delta_{i}(j)=0$
if $i\neq j$. Choose
$\mu_x$  such that $\mu_x(f_1)=1/3$ and 
$\mu_x(f_2)=2/3$, with $f_1=2\delta_{y}+\delta_{w}$ and $f_2=\delta_{z}$.
If the number of children and their locations are  independent variables, then we say that the BRW is a
BRW \textit{with independent diffusion}. A BRW with independent diffusion is fully described by the laws $\rho_x$,
which regulate the total number of children of an individual that lives in $x\in X$, and the transition matrix $P$, defined
as follows:
\begin{equation*}
\begin{split}
 \rho_x(n) &:= \mu_x(\{f\colon \sum_y f(y)=n\}),\\
 p(x, y) & := m_{xy}/\sum_{n \geq 0} n \rho_x(n).
\end{split}
\end{equation*}
Note that the expected value of $\rho$, $\sum_{n \geq 0} n \rho_x(n)$, coincides with $\sum_{y\in X} m_{xy}$.
For a BRW with independent diffusion we have:
\begin{equation}\label{eq:indepdif}
	\mu_x(f) = \rho_x \left( \sum_{y} f(y) \right)
	\frac{ ( \sum_{y} f(y))! }{ \prod_{y} f(y)! }
	\cdot \prod_{y} p(x, y)^{f(y)}, 
	\quad \forall f \in S_X.
\end{equation}

\subsection{The discrete-time counterpart of a continuous-time BRW}
\label{subsec:discretecounterp}

For any continuous-time BRW, we associate
a discrete-time BRW, which we call its \textit{discrete-time counterpart}.
Time in the discrete-time counterpart represents generation: 
at time $n+1$ the only individuals alive are those who
are offspring of individuals of the $n$-th generation.
 If we define
\begin{equation*}
\rho_x(i)=\frac{1}{1+\lambda k(x)} \left ( \frac{\lambda k(x)}{1+\lambda k(x)} \right )^i, \qquad
p(x,y)=\frac{k_{xy}}{k(x)}, \qquad k(x):=\sum_{y \in X} k_{xy}, 
\end{equation*}
then it is easy to show that
$\mu_x$ satisfies Equation \eqref{eq:indepdif}.

Note that the two processes do not share the same time clock and some individuals of the $n$-th
generation may be born after individuals of the $m$-th generation, even if $n>m$.
However, the total number of individuals ever born at each site is the same for both processes.
This means that the continuous-time BRW survives (in $A$, strongly in $A$ or globally)  with the same probability as its discrete-time counterpart.

\subsection{BRW with ageing}
\label{subsec:ageing}

In the classical continuous-time BRW, each individual maintains its reproductive capacity throughout
their entire life (the Poisson clock that regulates breeding is time-homogeneous, i.e., it has a constant parameter $\lambda$).
In real life such a capacity typically decays with age. It is therefore natural to extend the definition of BRW to 
the case where breeding occurs at the arrival times of an inhomogeneous Poisson
process.

\begin{Definition}
\label{def:ageBRW}
Given an at most countable set $X$ and  a family
$
\mathcal{R} := \{r_{xy}\}_{x,y \in X}
$, where each $r_{xy}\in L^1_{loc}([0,+\infty))$, we  denote by
$(X, \mathcal{R})$
 the family of  BRWs, indexed by $\lambda>0$, 
 defined as follows.
Each individual dies at rate 1 and an individual living in $x\in X$, produces offspring and sends them to $y\in X$, 
at the arrival times of a Poisson point process of intensity
$\lambda r_{xy}(t-\bar t)$, where $\bar t\ge0$ is the time of birth of the parent.
All breeding and death events, for different individuals, are independent. 
These processes are called \textsl{BRWs with ageing} or  \textsl{ageing BRWs}.
\end{Definition}
%

In an ageing BRW, the average number of children placed in $y$ by an individual at $x$ during its lifetime is
\begin{equation}\label{eq:1mom-ageing}
m_{xy} = \lambda \int_0^{\infty} r_{xy}(s)\,\exp(-s)\,ds. 
\end{equation}
As in the discrete-time case, 
we require $\sup_{x \in X} \sum_{y \in X}m_{xy}<+\infty$.
 The graph structure $(X,E)$ is again defined in terms of the first moment matrix: $(x,y)\in E$ if and only if
$m_{xy}>0$.

We note that continuous-time BRWs are a particular case of ageing BRWs, where $r_{xy}(s)=k_{xy}$,
 for all $s\in [0,+\infty)$.
 As for classical continuous-time BRWs, the family is monotone in $\lambda$. Therefore, we are able to define the critical parameters
$\lambda_w(\cdot)$ and $\lambda_s(\cdot)$ of \((X, \mathcal{R})\), as in Definition \ref{def:critical}.
We call the family \emph{ageing BRW} or \emph{BRW with ageing}.
Clearly, breeding is subject to ageing, in common sense, when functions $r_{xy}$ are eventually non-increasing.
Nevertheless, we use this terminology for all choices of the family $\mathcal R$.

For any given $\lambda>0$, the process $\{\eta_t\}_{t\ge0}$, where $\eta_t(x)$ is the number of individuals alive in $x$ at time $t$,
 is not a Markov process, unless the intensities are constant functions, i.e. unless it coincides with a continuous-time classical BRW.
The process becomes Markovian once we keep track of the age of each individual, that is, we consider the process 
$\{\eta_t, A_{n,t}\}_{t\ge0, n\in\mathbb N}$, where $A_{n,t}(x)$ represents the age, at time $t$, of the $n$-th individual born in the site $x$.

We observe that a choice of more general lifetimes does not provide a meaningful generalisation, as long
as we are only interested in the first moment matrix.
Indeed, if each individual at $x$ has a random lifetime with cumulative distribution function \(T_x\),
then by writing $\widetilde r_{xy}(s)=r_{xy}(s)\,(1 - T_x(s))\exp(s)$, we get
\begin{equation*}
\begin{split}
m_{xy} &= \lambda \int_0^{\infty} r_{xy}(s)\,(1 - T_x(s))\,ds\\
& =\lambda \int_0^{\infty} \widetilde r_{xy}(s)\,\exp(-s)\,ds.
\end{split}
\end{equation*}

For any ageing BRW, we associate its discrete-time counterpart, where time represents generation.
We may compute the measure $\mu_x$, which rules the number and locations of children of a parent living at $x$
\begin{equation}\label{eq:discrete-ctp-ageing}
\mu_x(f) = \int_0^{\infty}  \prod_{y \in X} \left( \frac{\exp(-\lambda \int_0^{t} {r_{xy}(s) ds} )(\lambda \int_0^{t} {r_{xy}(s) ds})^{f(y)}}{f(y)!} \right) \exp(-t)\, dt.
 \end{equation}

We note that the asymptotic behaviour of a BRW with ageing and of its discrete-time counterpart coincide: the two processes
share the same extinction probabilities and they also share the same first moment matrix.
Moreover, even if the ageing BRW is not markovian, its discrete-time counterpart is, since it takes into account the children that an
individual produces, during their whole life.

\section{Survival and extinction of a BRW with ageing}
\label{sec:surv}

Since BRWs with ageing share the first moment matrix with their discrete-time counterpart, whenever survival depends only
on the first moments of the process, we are able to carry the results for the latter, to the former.
In particular, we are able to fully characterise local survival.
\begin{Theorem}\label{th:locsurv-age}
	Let $(X,\mathcal{R})$ be an ageing BRW.
	Let $K= (k_{xy})_{x,y \in X}$ be the matrix defined by
	\begin{equation*}
	k_{xy}= \int_0^{\infty} r_{xy}(s)\,\exp(-s)\,ds.
	\end{equation*}
	Denote by $k^{(n)}_{xy}$ the entries of its $n$-th power.
	 Fix $\lambda>0$ and $x\in X$. The process survives locally at $x$ if and only if 
	 \begin{equation*}
	 \lambda  \limsup_{n \to +\infty} \sqrt[n]{k^{(n)}_{xx}} > 1.
	 \end{equation*}
	Equivalently, $\lambda_s(x)=1/  \limsup_{n \to +\infty} \sqrt[n]{k^{(n)}_{xx}}$ and
	\begin{equation}
\label{eq:lambdas2}
\lambda_s(x)=
\sup 
\{ \lambda 
\in {\mathbb R}:\Phi(x,x|\lambda)< 1\}.
\end{equation}
		\end{Theorem}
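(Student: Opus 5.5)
The plan is to pass to the discrete-time counterpart $(X,\mu)$ of the ageing BRW, with $\mu_x$ given by \eqref{eq:discrete-ctp-ageing}. As noted at the end of Section~\ref{subsec:ageing}, the two processes count the same individuals ever born at each site. Hence they have the same local extinction probabilities $\mathbf{q}(x,x)$, and their first-moment matrices agree: by \eqref{eq:1mom-ageing}, $m_{xy}=\lambda k_{xy}$ with $K$ as in the statement. It therefore suffices to characterise local survival of a discrete-time BRW in terms of its first-moment matrix $M=\lambda K$. The known result for discrete-time BRWs (see~\cite{cf:BZ2}) is that local survival at $x$ holds if and only if $\limsup_n \sqrt[n]{m^{(n)}_{xx}}>1$. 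Since $m^{(n)}_{xx}=\lambda^n k^{(n)}_{xx}$, this is exactly $\lambda\limsup_n\sqrt[n]{k^{(n)}_{xx}}>1$. The standing hypotheses of Section~\ref{subsec:discrete} need checking: $\sup_x\sum_y m_{xy}<\infty$ is assumed for ageing BRWs, and non-degeneracy holds because under $\mu_x$ the number of children equals $0$ with positive probability (death before the first birth).

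If one prefers not to cite the discrete-time result, I would sketch its proof by reducing to a Galton--Watson process. Consider the individuals at $x$ whose ancestral line, since the previous visit to $x$, avoids $x$; these form the successive ``$x$-generations''. They constitute a Galton--Watson process whose mean offspring is $\sum_n \varphi^{(n)}_{xx}\lambda^n=\Phi(x,x|\lambda)$, by the interpretation of $\varphi^{(n)}_{xy}$ given after \eqref{eq:defPhi}, with $k$ replaced by the first moments of the ageing process. Local survival at $x$ is equivalent to survival of this Galton--Watson process. That holds if and only if $\Phi(x,x|\lambda)>1$, or $\Phi(x,x|\lambda)=1$ with deterministic offspring $1$, which is excluded here since offspring $0$ has positive probability.

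One then links $\Phi$ with the spectral radius through the renewal identity
\[
G(x,x|\lambda):=\sum_{n\ge0}k^{(n)}_{xx}\lambda^n=\frac{1}{1-\Phi(x,x|\lambda)}.
\]
Let $R=1/\limsup_n\sqrt[n]{k^{(n)}_{xx}}$. From the identity, $\Phi(x,x|\lambda)<1$ for $\lambda<R$. Also $\Phi(x,x|\lambda)\ge1$ can occur for $\lambda\le R$ only at $\lambda=R$, since $\Phi$ is increasing and $G$ is finite inside its radius. Hence $\Phi>1$ exactly for $\lambda>R$, at least when $\Phi$ is finite there; if $\Phi$ is infinite beyond $R$, the conclusion holds trivially. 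This yields both $\lambda_s(x)=R$ and \eqref{eq:lambdas2}, the latter also following directly from \eqref{eq:lambdas1} applied to the classical BRW $(X,K)$. That classical BRW has the same first-moment matrix and hence the same $\lambda_s(x)$.

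The main obstacle is the critical point $\lambda=R$: one must show there is no local survival when $\lambda\limsup\sqrt[n]{k^{(n)}_{xx}}=1$. My approach is to show $\Phi(x,x|R)\le1$. This holds because $\Phi(x,x|\lambda)<1$ for all $\lambda<R$, and monotone convergence gives $\Phi(x,x|R)=\lim_{\lambda\uparrow R}\Phi(x,x|\lambda)\le1$. The embedded Galton--Watson process is then critical or subcritical and non-degenerate, so it dies out almost surely. This gives ``only if'' in the statement with strict inequality.
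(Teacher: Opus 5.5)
Your proposal is correct and takes the same route as the paper: you pass to the discrete-time counterpart \eqref{eq:discrete-ctp-ageing}, which has the same local extinction probabilities and first-moment matrix $M=\lambda K$, and then apply the first-moment characterisation of local survival for discrete-time BRWs. The paper cites \cite[Theorem 2.4]{cf:BZ17} for that characterisation, which covers general discrete-time BRWs, whereas \cite{cf:BZ2} concerns continuous-time BRWs (and the counterpart of an ageing BRW need not have independent diffusion). This does not matter, because your embedded Galton--Watson and renewal-identity sketch proves exactly the needed statement, including the critical case $\lambda=R$.
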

\begin{proof}
For any fixed $\lambda$, the ageing BRW has the same extinction probabilities as its discrete-time counterpart, which is given by the
reproduction measures in Equation \eqref{eq:discrete-ctp-ageing}, and the same first-moment matrix $M=(m_{xy})$, defined by
Equation \eqref{eq:1mom-ageing}.
The claim is a direct consequence of \cite[Theorem 2.4]{cf:BZ17},
which links local survival with the first-moment matrix and allows
us to use the characterisation in Equation \eqref{eq:lambdas2}.
\end{proof}

In general, it is more difficult to characterise global survival. The following result provides a sufficient condition for global extinction, 
which is similar to the one for local extinction, stated in Theorem \ref{th:locsurv-age}.
\begin{Theorem}\label{th:suff-gl-ext-age}
Let $(X,\mathcal{R})$ be an ageing BRW.
	Let  $k^{(n)}_{xy}$ be as in Theorem  \ref{th:locsurv-age}.
	 Given a fixed $x\in X$,
	  $\lambda_w(x)\ge 1/ \liminf_{n \to +\infty} \sqrt[n]{\sum_{y\in X}k^{(n)}_{xy}}$.
	  Equivalently, if $\lambda<1/ \liminf_{n \to +\infty} \sqrt[n]{\sum_{y\in X}k^{(n)}_{xy}}$, then the process goes  extinct almost surely.
\end{Theorem}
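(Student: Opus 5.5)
We reduce to the discrete-time counterpart and use a first-moment (Markov inequality) argument on generations. For fixed $\lambda$, the ageing BRW and its discrete-time counterpart (with reproduction laws \eqref{eq:discrete-ctp-ageing}) have the same extinction probabilities and the same first-moment matrix $M=\lambda K$, by \eqref{eq:1mom-ageing}. Indeed, the total number of individuals ever born at each site coincides for the two processes, so global survival of one is equivalent to global survival of the other. It therefore suffices to prove that the discrete-time counterpart dies out almost surely, starting from one individual at $x$, whenever $\lambda<1/\liminf_{n}\sqrt[n]{\sum_y k^{(n)}_{xy}}$.

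Let $Z_n$ be the number of individuals in generation $n$, and $Z_n(y)$ the number of those located at $y$. By independence of the reproductions and by conditioning on generation $n-1$ (a standard induction), $\mathbb{E}^x[Z_n(y)]=m^{(n)}_{xy}=\lambda^n k^{(n)}_{xy}$. The interchange of sums is justified by nonnegativity, and the entries are finite thanks to the assumption $\sup_x\sum_y m_{xy}<\infty$. Summing over $y$ gives $\mathbb{E}^x[Z_n]=\lambda^n\sum_{y}k^{(n)}_{xy}$.

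Now set $L:=\liminf_n\sqrt[n]{\sum_y k^{(n)}_{xy}}$ and suppose $\lambda L<1$ (if $L=0$ this holds for every $\lambda$). Choose $\varepsilon>0$ with $\lambda(L+\varepsilon)<1$. Along a subsequence $n_j$ we have $\sum_y k^{(n_j)}_{xy}\le (L+\varepsilon)^{n_j}$, hence $\mathbb{E}^x[Z_{n_j}]\le (\lambda(L+\varepsilon))^{n_j}\to0$. By Markov's inequality, $\pr^x(Z_{n_j}\ge1)\to0$. Since $Z_n=0$ implies $Z_m=0$ for all $m\ge n$, the events $\{Z_n\ge1\}$ are decreasing in $n$, and
\[
\pr^x(\text{global survival})=\lim_n\pr^x(Z_n\ge1)=\lim_j \pr^x(Z_{n_j}\ge1)=0 .
\]
Hence $\mathbf{q}(x,X)=1$ for every such $\lambda$, which gives $\lambda_w(x)\ge 1/L$.

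The only delicate point is the first step, the transfer of global extinction from the discrete-time counterpart to the ageing process; the paper already asserts this equivalence, so it can be invoked directly. The remaining point to be careful about is that a $\liminf$ (not a $\limsup$) suffices, and this holds exactly because of the monotonicity of the events $\{Z_n\ge 1\}$.
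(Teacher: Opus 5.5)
Your proof is correct and follows the paper's route: the paper also passes to the discrete-time counterpart, but then simply cites \cite[Theorem 2.5.2]{cf:BZ17} for the discrete-time extinction criterion. You instead prove that criterion directly via $\mathbb{E}^x[Z_n]=\lambda^n\sum_y k^{(n)}_{xy}$, Markov's inequality along a subsequence realising the $\liminf$, and the monotonicity of the events $\{Z_n\ge 1\}$. Only the easy direction of the transfer is actually needed: finitely many individuals ever born forces global extinction of the ageing process, so you do not have to rely on the full equivalence of extinction probabilities.
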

\begin{proof}
Using the discrete-time counterpart of the BRW, the claim follows from \cite[Theorem 2.5.2]{cf:BZ17}.
\end{proof}
If the discrete-time counterpart has some symmetries, such as transitivity or quasi-transitivity, then it is possible to characterise the global critical parameters.
Quasi-transitivity corresponds to the fact that there is
a finite subset $X_0\subseteq X$ such that for all $x\in X$ there exists an automorphism $\gamma:X\to X$, with $\gamma(x)\in X_0$, and 
the reproduction laws are $\gamma$-invariant. If $|X_0|=1$, then the process is said to be transitive.
A larger class of BRWs, which includes the transitive and quasi-transitive case, is the class of $\mathcal F$-BRW
(see, for example, \cite[Section 2.4]{cf:BZ17}). Roughly speaking, from the viewpoint of their reproduction laws,
 these processes have a finite number of ``types of neighbourhoods''. 
\begin{Theorem}\label{th:glcrit}
Let $(X,\mathcal{R})$ be an ageing BRW and suppose that its discrete-time counterpart is an $\mathcal F$-BRW.
Fix $x\in X$, $\lambda>0$. 
There is global survival, starting from $x$, if and only if $\lambda > 1/\liminf_{n \to +\infty} \sqrt[n]{\sum_{y\in X}k^{(n)}_{xy}}$.
\end{Theorem}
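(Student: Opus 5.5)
We reduce to the discrete-time counterpart, exactly as in the proofs of Theorems \ref{th:locsurv-age} and \ref{th:suff-gl-ext-age}. For fixed $\lambda>0$, the ageing BRW $(X,\mathcal R)$ and its discrete-time counterpart, with reproduction laws $\mu_x$ given by Equation \eqref{eq:discrete-ctp-ageing}, have the same extinction probabilities $\mathbf q(x,X)$. They also have the same first-moment matrix $M=\lambda K$, by Equation \eqref{eq:1mom-ageing}. Hence $\sum_y m^{(n)}_{xy}=\lambda^n\sum_y k^{(n)}_{xy}$, and
\[
\liminf_{n\to+\infty}\sqrt[n]{\sum_{y\in X}m^{(n)}_{xy}}=\lambda\,\liminf_{n\to+\infty}\sqrt[n]{\sum_{y\in X}k^{(n)}_{xy}}.
\]
It therefore suffices to show that the discrete-time counterpart, assumed to be an $\mathcal F$-BRW, survives globally from $x$ if and only if this quantity exceeds $1$.

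The ``only if'' direction is essentially Theorem \ref{th:suff-gl-ext-age}. If the quantity is $<1$, there is global extinction. The borderline case, where the quantity equals $1$, needs the finer statement for $\mathcal F$-BRWs. There, $\liminf$ and $\lim$ of $\sqrt[n]{\sum_y m^{(n)}_{xy}}$ coincide, and extinction at criticality holds by the result of \cite{cf:BZ17} characterising global survival of $\mathcal F$-BRWs. That result says an $\mathcal F$-BRW survives globally iff $\liminf_n\sqrt[n]{\sum_y m^{(n)}_{xy}}>1$; I recall it as \cite[Theorem 2.4]{cf:BZ17} or its corollary for $\mathcal F$-BRWs. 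The ``if'' direction follows from the same result.

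The plan is to invoke that characterisation directly, after checking its hypotheses. The first is the boundedness condition $\sup_x\sum_y m_{xy}<\infty$, which the paper imposes on ageing BRWs. The second is the non-triviality condition, that in each irreducible class some vertex has a number of children different from $1$ with positive probability. For the counterpart this holds automatically. From Equation \eqref{eq:discrete-ctp-ageing}, the total number of children of an individual at $x$ is a mixed Poisson variable. Whenever $\lambda\int_0^t\sum_y r_{xy}$ is not identically zero, it has positive mass at $0$ together with a positive mean, so it is not almost surely $1$. If it is identically zero, the individual has no children, which is again not $1$.

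The main obstacle is the critical case $\lambda=1/\liminf_n\sqrt[n]{\sum_y k^{(n)}_{xy}}$. Survival is excluded there only because of the $\mathcal F$-structure; for general BRWs global survival at the critical value is possible. So I must make sure the cited $\mathcal F$-BRW theorem covers arbitrary (not necessarily independent-diffusion) reproduction laws. The counterpart of an ageing BRW is generally not of independent diffusion, since the Poisson counts toward different $y$ share the random lifetime $t$. If the cited theorem were stated only under additional assumptions, I would fall back on its proof. That proof works with the first-moment generating functional and the finitely many neighbourhood types, and should depend only on $M$ and the $\mathcal F$-invariance of the laws $\mu_x$. The laws $\mu_x$ inherit that invariance from $\mathcal R$ through Equation \eqref{eq:discrete-ctp-ageing}.
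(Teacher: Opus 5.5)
Your proposal is correct and follows the paper's proof: pass to the discrete-time counterpart, which has the same extinction probabilities and the same first-moment matrix $\lambda K$, and then apply the Bertacchi--Zucca characterisation of global survival for $\mathcal F$-BRWs. The paper cites that characterisation as \cite[Theorem 2.5.3]{cf:BZ17}; the Theorem 2.4 you recall is the one it uses for local survival in Theorem \ref{th:locsurv-age}, and since the paper applies 2.5.3 directly to the (non-independent-diffusion) counterpart, your fallback argument is not needed.
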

\begin{proof}
The claim follows from \cite[Theorem 2.5.3]{cf:BZ17}, which states that the same holds for the discrete-time counterpart of the process.
\end{proof}

\section{Critical parameters under local modifications}
\label{sec:crit-mod}

From now on, unless otherwise stated, all processes in the paper will be irreducible.
In BRWs, an interesting topic is how the behaviour of the process changes after local modifications of the reproduction laws.
In continuous-time BRWs, local changes may affect the value of critical parameters.
This phenomenon has been investigated in detail in \cite{cf:BZmathematics}, and extended to more general families of BRWs in \cite{cf:BZ-critical26}. 
To be precise, we formally define local modifications of ageing BRWs. Note that the definition also covers the case of continuous-time
BRWs, since these processes are particular cases of ageing BRWs, where the reproduction rates are constant.
\begin{Definition}
Let $(X,\mathcal R)$ and $(X,\mathcal R^*)$ be two ageing BRWs, defined on the same space $X$.
If the following set is finite
\[
\{x \in X \colon \exists y \in X, \, r_{xy}\neq r^*_{xy}\},
\]
 then we say that $(X,\mathcal R)$  is a \textsl{local modification of} $(X,\mathcal R^*)$  (and vice versa).
\end{Definition}
Let $(X,K)$ be an irreducible continuous-time BRW,
 with critical parameters $\lambda_w$ and $\lambda_s$, and consider  a local
 modification $(X,K^*)$.
 Let $\lambda_w^*$ and $\lambda_s^*$ be
the critical parameters of $(X,K^*)$.
It has been proven (see \cite[Corollary 2, Corollary 3, Figure 1]{cf:BZmathematics}) that if  $(X,K)$ has a pure global survival phase (i.e. $\lambda_w<\lambda_s$), then it is not possible to increase $\lambda_w$ through local 
modifications of the parameters, that is, $\lambda_w^*\le\lambda_w$. Moreover, if  $\lambda_w\neq \lambda_w^*$, this means
that $\lambda_w^*=\lambda_s^*<\lambda_w$.
Equivalently, if $\lambda_w^*<\lambda_s^*$, then $\lambda_w=\lambda_w^*$.

In some cases, these results can also be exploited to retrieve the value of the global critical parameter.
Indeed, suppose that $(X,K)$ is an $\mathcal F$-BRW (recall that examples are quasi-transitive BRWs)
and $\lambda_w<\lambda_s$. A local modification $(X,K^*)$ is highly likely to lack the property of being an $\mathcal F$-BRW.
In this case, we do not have an explicit expression for $\lambda_w^*$,
but we have the lower bound in Theorem \ref{th:suff-gl-ext-age}, 
and we know that $\lambda_w^*\le\lambda_w$.
On the other hand, the value of $\lambda_s^*$ is known, by
Theorem \ref{th:locsurv-age}. If we are able to prove that $\lambda_w^*<\lambda_w$, or that $\lambda_s^*\le\lambda_w$,
then we know that $\lambda_w^*=\lambda_s^*$.

The goal of this section is twofold. On the one hand, we study what happens to the critical parameters
of an ageing BRW after 
local modifications, using
Theorem \ref{cor:maximality}. On the other hand, we compare an ageing BRW with the classical continuous-time
BRW that shares the same first-moment matrix. In particular, we investigate the presence or absence of the pure global survival phase.

We note that the results in \cite{cf:BZmathematics}, which deal with the critical parameters of continuous-time BRWs under local
changes, cannot be applied directly to ageing BRWs, since, in general, an ageing BRW and a continuous-time BRW, even if they have the
same first-moment matrix, may have different behaviours.
Instead, we make use of the following result that applies to the family of ageing BRWs.
We omit the proof, since it is a direct application of the theorems in \cite{cf:BZ-critical26}.

 \begin{Theorem}\label{cor:maximality}
Let 
$(X,\mathcal R)$ and $(X,\mathcal R^*)$  be two ageing BRWs.
Denote by $\lambda_w$,
 $\lambda_s$, $\lambda_w^*$,
 $\lambda_s^*$, the critical parameters of the two processes.
Suppose that $\lambda_w < \lambda_s$ and that  $(X,\mathcal R)$ is a local modification of $(X,\mathcal R^*)$.
%
Then we have the following.
\begin{enumerate}
    \item $\lambda_w^{*} \le \lambda_w$.
    \item If $\lambda_w^* < \lambda_s^*$, then $\lambda_w^* = \lambda_w$.
    \item If $\lambda_s^* \le \lambda_w$, then $\lambda_w^* = \lambda_s^*\le\lambda_w$. If $\lambda_s^* > \lambda_w$, then $\lambda_w^* = \lambda_w<\lambda_s^*$.
\end{enumerate}
\end{Theorem}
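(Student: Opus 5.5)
The plan is to reduce everything to discrete-time counterparts, as in the proofs of Theorems \ref{th:locsurv-age}, \ref{th:suff-gl-ext-age} and \ref{th:glcrit}. For each $\lambda$, $(X,\mathcal R)$ and $(X,\mathcal R^*)$ have the same extinction probabilities as their discrete-time counterparts, whose reproduction laws $\mu_x$ and $\mu^*_x$ are given by Equation \eqref{eq:discrete-ctp-ageing}. If $r_{xy}=r^*_{xy}$ for all $y$, then $\mu_x=\mu^*_x$. Hence the two counterparts differ only at the finitely many sites where the rates differ, so the discrete-time counterpart of $(X,\mathcal R)$ is a local modification of that of $(X,\mathcal R^*)$. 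Both families are monotone in $\lambda$, so the critical parameters $\lambda_w,\lambda_s,\lambda^*_w,\lambda^*_s$ of the ageing BRWs coincide with those of the counterpart families.

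The core step proves (1) by a rescaling argument. Fix $\lambda^*>\lambda_w$ with $\lambda^*<\lambda_s$. First I show global survival of $(X,\mathcal R^*)$ at $\lambda^*$. At $\lambda^*$, the unmodified process $(X,\mathcal R)$ survives globally but dies out locally a.s. at every site, by irreducibility. Let $A$ be the finite set where the two processes differ. With positive probability, the $\mathcal R$-process survives globally, and after some time it has no descendants visiting $A$ any more. This uses local extinction in each of the finitely many sites of $A$, together with a conditioning argument: a.s. only finitely many individuals are ever born in $A$.

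I then start the $\mathcal R^*$-process from an individual outside $A$, at a site from which, with positive probability, the $\mathcal R$-process survives globally without ever visiting $A$. Such a site exists: take a far-away descendant in the event above and use the branching property. On that event the two processes can be coupled to coincide. Hence $(X,\mathcal R^*)$ survives globally at $\lambda^*$, and by irreducibility (global extinction probabilities are simultaneously $<1$ over an irreducible class) from every site. This gives $\lambda_w^*\le\lambda_w$. This is exactly the structure of the cited results in \cite{cf:BZ-critical26}; there it is stated for general families of discrete-time BRWs indexed monotonically by a parameter, so I would invoke it directly if its hypotheses (monotone family, local modification, irreducibility) are verified as above.

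For (2) and (3), I exploit the symmetry of ``local modification''. If $\lambda_w^*<\lambda_s^*$, then the $\mathcal R^*$ process has a pure global survival phase, and the argument above with the roles of $\mathcal R$ and $\mathcal R^*$ swapped yields $\lambda_w\le\lambda_w^*$; together with (1) this gives equality, proving (2). For (3), note $\lambda_w^*\le\lambda_s^*$ always.
\begin{itemize}
\item If $\lambda_s^*\le\lambda_w$: were $\lambda_w^*<\lambda_s^*$, (2) would give $\lambda_w=\lambda_w^*<\lambda_s^*\le\lambda_w$, a contradiction. Hence $\lambda_w^*=\lambda_s^*\le\lambda_w$.
\item If $\lambda_s^*>\lambda_w$: by (1), $\lambda_w^*\le\lambda_w<\lambda_s^*$, so (2) applies and gives $\lambda_w^*=\lambda_w<\lambda_s^*$.
\end{itemize}

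The main obstacle is the coupling step in (1): producing, with positive probability, a globally surviving sub-population that never visits $A$. This needs care because irreducibility forces paths back to $A$, and only local extinction makes those returns eventually stop. I would handle it by using $\mathbf q(y,A)=1$ for all $y$, together with the fact that $\mathbf q(y,X)<1$ and $\mathbf q(y,X)$ is uniformly comparable across sites reachable with positive probability.
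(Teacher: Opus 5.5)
Your proof is correct, but it takes a different route from the paper. The paper gives no argument of its own: it calls the statement a direct application of the theorems of \cite{cf:BZ-critical26} on germ-monotone families of BRWs, after the reduction to discrete-time counterparts described in your first paragraph.

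You instead prove (1) by hand. For $\lambda\in(\lambda_w,\lambda_s)$, local extinction of $(X,\mathcal R)$ at the finitely many sites of $A$ gives a.s.\ finitely many births in $A$. So on the global survival event, some individual born after the last birth in $A$ has an infinite line of descent avoiding $A$. Labelling individuals (Ulam--Harris) and using the branching property yields a site $y\notin A$ with $\pr^y(\text{global survival without visiting }A)>0$. The coupling of the two processes, which agree off $A$, transfers this to $(X,\mathcal R^*)$, and irreducibility does the rest. Items (2) and (3) then follow from the symmetry of ``local modification'' and your case analysis, which is sound.

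Your route gives a self-contained, elementary proof that uses only irreducibility, the branching property and the fact that the counterparts coincide off $A$. The citation gives generality, together with the finer companion statement on extinction probabilities (the generalisation of \cite[Theorem 3]{cf:BZmathematics}) that the paper needs later in Theorem~\ref{th:trans-mod}.

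Two minor points. The argument is a coupling, not a ``rescaling''. The ``uniform comparability'' of $\mathbf q(y,X)$ in your last paragraph is unnecessary, and if meant uniformly in $y$ it is false in general: one can have $\mathbf q(y,X)<1$ for every $y$ and $\sup_y\mathbf q(y,X)=1$. The countability argument you already gave is enough.
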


In the following examples, we consider irreducible processes; therefore,
$\lambda_s$ and $\lambda_w$ do not depend on the vertices. In these examples $X=\mathbb{T}_d$, the homogeneous tree of degree $d>3$, and we write $x\sim y$ for vertices
$x,y\in \mathbb T_d$ that are adjacent. It is well known that rapidly growing trees, such as homogeneous trees, are the ideal setting
where populations may survive on the graph, although they eventually vacate any finite portion of the graph itself
(for early papers on BRWs on trees, see \cite{cf:Ligg1, cf:Ligg2, cf:MadrasSchi, cf:MP03, cf:PemStac1}).
We formally state this fact by recalling the example of the continuous-time BRW, which is adapted to the graph structure, i.e. the breeding 
may occur only along the edges of the graph, and homogeneous in the sense that it spreads equally along any edge 
(Example \ref{ex:homtree}). 
 The critical parameters of this BRW are well-known. They are evaluated, for example, in \cite[Example~4.2]{cf:BZ14-SLS}.
 We provide a proof for the  sake of completeness. This proof, for the computation of 
 $\lambda_s$, is different from the one in \cite{cf:BZ14-SLS}, and exploits the first-return generating function $\Phi$ and its connection
 with $\lambda_s$ (see Equations \eqref{eq:defPhi} and \eqref{eq:lambdas1}).
 
\begin{Example}\label{ex:homtree} 
	Fix $k>0$ and consider the continuous-time BRW $(X,K)$ on the homogeneous tree $\mathbb{T}_d$ of degree $d\ge3$, where $K$ is
	defined as $k$ times the adjacency matrix of $\mathbb T_d$.
	   The critical parameters satisfy
	\[
	\lambda_w = \frac{1}{kd} < \frac{1}{2k\sqrt{d-1}} = \lambda_s.
	\]
	Hence, 
	there exists a \emph{pure global survival phase}. 
\end{Example}
 \begin{proof}
The total number of individuals alive at time $t$ coincides with the same quantity for a continuous-time branching process
with rate $\lambda kd$ (equivalently, this is the BRW on the singleton  $X=\{x\}$, with $K=(k_{xx})$, $k_{xx}=kd$).
Since a branching process survives if and only if the expectation of the offspring distribution is larger than 1, $\lambda_w=1/kd$.

In order to compute $\lambda_s$, we make use of the characterisation in Equation \eqref{eq:lambdas1}. Fix a vertex $o$ and call it root,
and name $x_1$ one of the neighbours of $o$. From rotational invariance and absence of cycles, we get
\[
\begin{split}
\Phi_{\mathbb{T}_d}(o,o|\lambda)& = d\lambda k \cdot\Phi_{\mathbb{T}_d}(x_1,o|\lambda),\\
\Phi_{\mathbb{T}_d}(x_1,o|\lambda)&= \lambda k + (d-1)\lambda k\cdot \Phi^2_{\mathbb{T}_d}(x_1,o|\lambda).
\end{split}
\]
By solving the quadratic equation, we obtain
\begin{equation*}
\Phi_{\mathbb{T}_d}(o,o|\lambda) =  d \frac{1-\sqrt{1-4(d-1)\lambda^2 k^2}}{2(d-1)},
\end{equation*}
whence, from Equation \eqref{eq:lambdas1},  $\lambda_s =1/(2k\sqrt{d-1}) $.
 \end{proof}
 
 In the following examples, we study how local modifications of this process may or may not affect the critical parameters.
The next example is a local modification of Example \ref{ex:homtree} and generalises \cite[Example 1]{cf:BZmathematics}, where the case $k=1$ was studied.
We observe that the addition of a fast local reproduction can destroy the pure global survival phase: when $k_{oo}$ is small,
 the critical parameters coincide with those in Example \ref{ex:homtree}; for intermediate values the global critical parameter
 is unchanged, while the local one decreases in $k_{oo}$; for large values of $k_{oo}$ they coincide.
\begin{Example}\label{ex:treeloop}
Fix a vertex $o \in \mathbb{T}_d$ and call it the root of the tree.
Consider the continuous-time BRW $(X,K^*)$, with $K^*=(k^*_{xy})$ given by
\[
  k^*_{xy} = 
  \begin{cases}
k & \text{if } x \sim y,\\
	k_{oo}  & \text{if } x = y = o,
   \end{cases}
   \]
   where $k$ and $k_{oo}$ are two nonnegative parameters.
   Let $\lambda_w^*$ and $ \lambda_s^*$ be the two critical parameters of the process. Then we have the following explicit expressions.
   \begin{enumerate}
       \item 
       If $k_{oo} \le \frac{k(d-2)} {\sqrt{d-1}}$, then $\lambda_w^* = \frac{1}{kd} <\lambda_s^*= \frac{1}{2k\sqrt{d-1}} $.
\item  If $\frac{k(d-2)} {\sqrt{d-1}}< k_{oo} \le  \frac{kd(d-2)}{d-1}$, then
 $\lambda_w^* = \frac{1}{kd} <\lambda_s^*=\frac{(d-2)k_{oo}+d \sqrt{{k_{oo}}^2+4k^2}}{2 \big((dk)^2+(d-1){k^2_{oo}} \big)}$.
 \item  If $ \frac{kd(d-2)}{d-1}\le k_{oo}$, then
 $\lambda_w^* = \lambda_s^*=\frac{(d-2)k_{oo}+d \sqrt{{k_{oo}}^2+4k^2}}{2 \big((dk)^2+(d-1){k^2_{oo}} \big)}$.
   \end{enumerate}
 In particular, if  $k_{oo} < kd(d-2)/(d-1)$, then $\lambda_w^* < \lambda_s^*$; if $k_{oo} \geq kd(d-2)/(d-1)$, then
$\lambda_w^*= \lambda_s^*$.
 \end{Example}
 \begin{proof}
   Let $\Phi^*$ be the first-return generating function of the BRW  and let $\Phi_{\mathbb{T}_d}$ be the corresponding one of the BRW on $\mathbb{T}_d$, described in Example \ref{ex:homtree}. It is easy to see that
	\[
	\Phi^*(o,o|\lambda) = \Phi_{\mathbb{T}_d}(o,o|\lambda) + k_{oo}\lambda 
	= d \frac{1-\sqrt{1-4(d-1)\lambda^2 k^2}}{2(d-1)} + k_{oo}\lambda.
	\]
From the above expression of $\Phi^*$ and Equation \eqref{eq:lambdas1}, we derive the values of $\lambda_s^*$, for different values of $k_{oo}$. 
Since this BRW is a modification of the one in Example \ref{ex:homtree}, if we use the notation $\lambda_w$ and $\lambda_s$ for the
critical parameters  in that example,
from \cite[Corollary 2]{cf:BZmathematics}, we know that 
if $\lambda_s^*>\lambda_w$, then $\lambda_w^*=\lambda_w$.
Moreover, if $\lambda_s^*\le\lambda_w$, then
$\lambda_w^*=\lambda_s^*$.
\end{proof}

Now we add ageing to the process in Example \ref{ex:treeloop}.
In the following examples, ageing is modeled by an exponential decay in the time of the birth rate.
In Example \ref{ex:agelooptree}, each location is affected by age and ageing affects the root and the other vertices of the tree differently.
	
\begin{Example}\label{ex:agelooptree}
Let $o$ be a fixed vertex in the homogeneous tree $\mathbb T_d$, and fix positive real numbers $k, k_{oo},\alpha, \alpha_0$.
Let $(\mathbb T_d,\mathcal R^*)$ be the ageing BRW, with the following reproduction rate functions
\[
r^*_{xy} = 
\begin{cases} 
	    {k_{oo}}\cdot\exp(-\alpha_o t) & \text{if } x = y = o, \\
	   {k}\cdot\exp(-\alpha_o t) & \text{if } x = o, y \sim o, \\
   k\cdot\exp(-\alpha t)  & \text{if } x \neq o, y \sim x.
\end{cases}
\]
Let $\lambda_w^*$ and $\lambda_s^*$ be its global and local survival critical parameters, respectively.
Let $\lambda_w$ and $\lambda_s$ be the global and local survival critical parameters of the process
 with 
$k_{oo}=0$ and $\alpha_o=\alpha$.
There exist $k_1$ and $k_2$, depending on $d,k,\alpha,\alpha_o$, such that $k_2>\max(0,k_1)$, is increasing in $\alpha_0$ and the following
hold.
\begin{enumerate}
\item
If $\frac{\alpha_0+1}{\alpha+1}>\frac d{2(d-1)}$, then $k_1>0$ and for
$k_{oo}\in[0,k_1)$, $\lambda_w=\lambda_w^*<\lambda_s^*=\lambda_s$.
\item
If $\frac{\alpha_0+1}{\alpha+1}\le\frac d{2(d-1)}$ and $k_{oo}\in(0,k_2)$, then
$\lambda_w=\lambda_w^*<\lambda_s^*<\lambda_s$.
\item
If $k_{oo}\ge k_2$, then $\lambda_w^*=\lambda_s^*<\lambda_w$.
\end{enumerate}
\end{Example}
\begin{proof}
Let us denote by $(\mathbb T_d,\mathcal R)$, the 
ageing BRW, with reproduction rate functions
\[
r_{xy} = 
   k\cdot\exp(-\alpha t), \qquad \text{if } x  \sim y.
\]
Note that  $(\mathbb T_d,\mathcal R)$ is a particular case of $(\mathbb T_d,\mathcal R^*)$, 
where $\alpha_o=\alpha$ and $k_{oo}=0$.
First, we determine its  critical parameters, 
$\lambda_w$ and $\lambda_s$.
The discrete-time counterpart of $(\mathbb T_d,\mathcal R)$ shares the first-moment matrix with
 the discrete-time counterpart of the continuous-time BRW $(\mathbb T_d,K)$,
$K=(k_{xy})_{x,y\in\mathbb T_d}$, $k_{xy}=k/(\alpha+1)$ if $x\sim y$, $k_{xy}=0$ otherwise.
Since the discrete-time counterparts of $(\mathbb T_d,\mathcal R)$ and $(\mathbb T_d,K)$ are transitive BRWs
(although the first is an ageing BRW and the second is a continuous-time BRW), their critical parameters depend 
only on the first-moment matrix and thus they also share the global and local critical parameters.
Hence, as shown in Example \ref{ex:homtree}, 
$\lambda_w = (\alpha +1)/(kd)$, $  \lambda_s=(\alpha+1)/(2k\sqrt{d-1})$. Clearly, for
all $d\ge3$, the process has a pure global survival phase.

Now we note that $(\mathbb T_d,\mathcal R^*)$, when $\alpha_o\neq\alpha$ and $k_{oo}\neq0$, is a local modification of $(\mathbb T_d,\mathcal R)$.
If we fix $\alpha$, $\alpha_o$, $k$ and $k_{oo}$, then the families of processes 
 $(\mathbb T_d,\mathcal R^*)$ and  $(\mathbb T_d,\mathcal R)$, indexed by $\lambda$, are ordered
 families of BRWs.
Therefore, we are able to apply Theorem \ref{cor:maximality}, and claim that 
$\lambda_w^*\le \lambda_w=(\alpha +1)/(kd)$, and that
$\lambda_s^*\le\lambda_w$ if and only if
  $\lambda_w^*=\lambda_s^*$.

In order to compute $\lambda_s^*$, we note that we can use the discrete-time counterpart of
 $(\mathbb T_d,\mathcal R^*)$, which coincides with the one of the 
 continuous-time BRW that shares the same expected number of offspring at each site.
 Since, by Theorem \ref{th:locsurv-age}, the value of the local survival critical parameter depends only on the first-moment matrix, 
 we are left with the task of studying the 
 continuous-time BRW $(\mathbb T_d,K^*)$, with $K^*=(k^*_{xy})_{x,y\in\mathbb T_d}$, 
 \[
{k}^*_{xy} =
\begin{cases} 
	    \frac{k_{oo}}{\alpha_o +1} & \text{if } x = y = o, \\
	  \frac{k}{\alpha_o +1} & \text{if } x = o, y \sim x, \\
    \frac{k}{\alpha +1} & \text{if } x \neq o, y \sim x.
\end{cases}
\]
The values of ${k}^*_{xy}$ are computed by evaluating the expected number of children:
\[
{k}^*_{xy} = \int_0^\infty r^*_{xy} \exp(-\alpha_x t) \exp(-t)dt= \frac{k_{xy}}{\alpha_x+1},
\]
where we put $\alpha_x:=\alpha$, when $x\neq o$, and $k_{xy}=k$, when $x\neq o$, $x\sim y$.

We proceed to evaluate the first-return generating function $\Phi^*$, by comparison with
the generating function $ \Phi_{\mathbb{T}_d}$, calculated in Example \ref{ex:homtree}.
Note that the first-return paths can be partitioned according to the first
step. If the first step is along the loop $(o,o)$, then the expected number
of children along this path is ${\lambda k_{oo}}/(\alpha_o +1)$; if the
first step is from $o$ to $x$ ($x\sim o$), then the path is entirely
contained in $\mathbb T_d$ and the expected number of first-return
children is the same and in $(\mathbb T_d,K)$, multiplied by
$(\alpha +1)/(\alpha_o+1)$ (since the contribution of the expected number of children of the first step is $\lambda k/(1+\alpha_o)$ instead of 
$k\lambda/(1+\alpha)$).
Therefore, we have
\begin{equation}\label{eq:phi*}
\begin{split}
 \Phi^*(o,o| \lambda) & = \frac{\alpha +1}{\alpha_o+1}\cdot \Phi_{\mathbb{T}_d}(o,o| \lambda k / (\alpha +1)) + \frac{\lambda k_{oo}}{\alpha_o +1} 
\\
& = \frac{\alpha +1}{\alpha_o +1} \cdot d \frac{1-\sqrt{1-4(d-1)(\lambda k / (\alpha +1))^2}}{2(d-1)}+ \frac{\lambda k_{oo}}{\alpha_o +1}. 
\end{split}
\end{equation}
Since the square root decreases in $\lambda$, and is real if and only if
 $\lambda\le \frac{\alpha+1}{2k\sqrt{d-1}}$,
then 
\begin{align*}
 \Phi^*(o,o| \lambda) & \le \Phi^*\left(o,o\,\Big| \frac{\alpha+1}{2k\sqrt{d-1}}\right)\\ 
& = \frac{\alpha +1}{\alpha_o +1}  \frac d{2(d-1)}+ \frac{k_{oo}}{\alpha_o +1} \frac{\alpha+1}{2k\sqrt{d-1}}. 
\end{align*}
The last quantity is not larger than 1 if $ k_{oo} \le k_1:=\Big( \frac{\alpha_o+1}{\alpha+1}- \frac{d}{2(d-1)} \Big) 2k \sqrt{d-1}$.
This implies that,
if  $k_{oo} \le k_1$,  then $\lambda_s^*=\lambda_s=\frac{\alpha+1}{2k\sqrt{d-1}}$ and $\lambda_w^*=\lambda_w= \frac{\alpha +1}{kd}$.
Note that $k_1>0$ if and only if $\frac{\alpha_0+1}{\alpha+1}>\frac d{2(d-1)}$, otherwise it
is not possible that $k_{oo}>k_1$.

On the other hand, since
\begin{align*}
 \Phi^*\left(o,o\Big| \frac{\alpha +1}{kd}\right) & = 
 \frac{\alpha +1}{\alpha_o +1} \cdot d \frac{1-\sqrt{1-4(d-1)/d^2}}{2(d-1)}+ \frac{k_{oo}}{\alpha_o +1} 
 \frac{\alpha +1}{kd}\\
 & =
  \frac{\alpha +1}{\alpha_o +1}  \frac{d(3-d)}{2(d-1)}+ \frac{k_{oo}}{kd}
   \frac{\alpha +1}{\alpha_o +1} ,
\end{align*}
 equals 1 if $k_{oo}= k_2:=kd\left(\frac{\alpha_o +1}{\alpha +1}+\frac {d(d-3)}{2(d-1)}\right)$, then
$\lambda_s^*=\lambda_w^*$ if $k_{oo}\ge k_2$.
%
\end{proof}
We remark that, in order to study the behaviour and the critical parameters of the general process in 
Example \ref{ex:agelooptree}, we cannot assume that the critical parameters coincide with the ones of
the continuous-time BRW with the same first-moment matrix. This is only true for the local critical parameter, but
in general it is not true for the global parameter, which depends not only on the first moments but also
on the offspring distribution. 

Nevertheless, if the BRW is quasi-transitive, then the global critical parameter depends only on the
 first-moment matrix (the same is true for $\mathcal F$-BRW). 
 Since the process in  Example \ref{ex:agelooptree} is not a quasi-transitive BRW (nor an $\mathcal F$-BRW), the trick is to find a transitive ageing BRW $(\mathbb T_d,\mathcal R)$, of which  $(\mathbb T_d,\mathcal R^*)$ 
 is a local
 modification. Then, on the one hand we can apply Theorem \ref{cor:maximality}, and compare $\lambda_s^*$ to $\lambda_w$;
 on the other hand we use the fact that the critical parameters of $(\mathbb T_d,\mathcal R)$
 coincide with those of the continuous-time BRW with the same first-moment matrix. We also stress that from
the expression of $\Phi^*(o,o|\lambda)$ in Equation \eqref{eq:phi*} and Equation \eqref{eq:lambdas1}, one could
derive (through easy, but tedious computation) the explicit expression of $\lambda_s^*$ (as in \cite[Example 1]{cf:BZmathematics},
see \cite{cf:tesiElena}).

The behaviour of the process in Example \ref{ex:agelooptree} can be seen from two different perspectives, according to the fact that we 
might use the process as a model for the spread of an endangered species or for disease/pest control.
In such models, we can view $\lambda$ as a fixed parameter which is a characteristic of the population.

In the case of endangered species, we see that encouraging local reproduction can lower the survival critical parameters, 
but ageing also plays a role. Indeed, the strength of the local reproduction rate, necessary to obtain $\lambda_w^*<\lambda$ or $\lambda_s^*<\lambda$, depends on the ageing parameters. Controlling the fertility decay with age, together with the
reproduction rates, becomes a reasonable strategy to prevent extinction.
On the other hand, in the case of pest/disease control, identifying sites where local reproduction drives the survival of the population, 
could be of great importance: tampering with the local reproduction rates and the fertility decay in these sites, may increase the
global and local survival critical parameters above the given parameter $\lambda$, driving this population to extinction.

In  Example \ref{ex:agelooptree}, we have seen that the addition of local reproduction, which takes into account the ageing parameter, in a single site, can destroy the pure global survival phase. In Example \ref{ex:homloops}, we see that in some cases, modifying just the ageing parameter, may eliminate the pure global survival phase.
\begin{Example}\label{ex:homloops}
Let $o$ be a fixed vertex in the homogeneous tree $\mathbb T_d$, and fix positive real numbers $k, k^*,\alpha, \alpha_0$.
Let $(\mathbb T_d,\mathcal R^*)$ be the ageing BRW, with the following reproduction rate functions
\[
r^*_{xy} = 
\begin{cases} 
	    {k^*}\cdot\exp(-\alpha_o t) & \text{if } x = y, \\
	      {k}\cdot\exp(-\alpha_o t) & \text{if } x = o, y \sim o, \\
   k\cdot\exp(-\alpha t)  & \text{if } x \neq o, y \sim x.
\end{cases}
\]
Let $\lambda_w^*$ and $\lambda_s^*$ be its global and local survival critical parameters, respectively.
Then, for any given $k$ and $k^*$, there are choices of $\alpha$ and $\alpha_o$ such that $\lambda_w^*<\lambda_s^*$
and other choices such that $\lambda_w^*=\lambda_s^*$.
In particular,  for any given $k$ and $k^*$, there are values of $\alpha$ such that, for some values of $\alpha_o$ we have
$\lambda_w^*<\lambda_s^*$
and for some other values of $\alpha_o$ we have $\lambda_w^*=\lambda_s^*$.
\end{Example}
\begin{proof}
Let us denote by $(\mathbb T_d,\mathcal R)$, the 
ageing BRW, with $\alpha=\alpha_o$. We proceed as in Example \ref{ex:agelooptree}.
The discrete-time counterpart of $(\mathbb T_d,\mathcal R)$ shares the first-moment matrix with
 the discrete-time counterpart of the continuous-time BRW $(\mathbb T_d,K)$,
$K=(k_{xy})_{x,y\in\mathbb T_d}$, $k_{xy}=k/(\alpha+1)$ if $x\sim y$, $k_{xx}=k^*$.
Since  $(\mathbb T_d,\mathcal R)$ and $(\mathbb T_d,K)$ are transitive BRWs,
 their critical parameters depend only on the first-moment matrix and share the global and local critical parameters.
 The  critical parameter for global survival coincides with the one of a branching process with an expected number of children equal to
 $(kd+k^*)/(\alpha+1)$, thus $\lambda_w = (\alpha +1)/(kd+k^*)$.
To compute $\lambda_s$, we write the first-return generating function $ \Phi(o,o|\lambda)$ of  $(\mathbb T_d,K)$, as
\begin{equation*}
	\begin{split}
\Phi(o,o|\lambda) & =
 \frac d{2(d-1)} \left(1-\frac{\lambda k^*}{\alpha+1}-\sqrt{\left(1-\frac{\lambda k ^*}{\alpha+1}\right)^2-\frac{4(d-1)\lambda^2 k^2}{(\alpha+1)^2}}\right)	\\
%
		  & =
		  \frac d{2(d-1)} \left(1-\sqrt{\left(1-\frac{\lambda k ^*}{\alpha+1}\right)^2-\frac{4(d-1)\lambda^2 k^2}{(\alpha+1)^2}}\right)
		  +\frac{(d-2)\lambda k^* }{2(d-1)(\alpha+1)}\\
		  \end{split}
\end{equation*}
Using the characterisation in Equation \ref{eq:lambdas2}, we obtain $\lambda_s = (\alpha+1)/ (k^* +2k \sqrt{d-1})$ which is strictly larger
than $\lambda_w$, for any choice of the parameters.

Now, we observe that, if $\alpha_o\neq\alpha$, then $(\mathbb T_d,\mathcal R^*)$ is a local modification of $(\mathbb T_d,\mathcal R)$.
We may compute $\lambda_s^*$, using the first-return generating function $\Phi^*$ of $(\mathbb T_d,\mathcal R^*)$.
Indeed $\Phi^*(o,o|\lambda)$ has the following expression:
\[
\frac{\alpha +1}{\alpha_o +1} \cdot \left(  \frac d{2(d-1)}\left(1-
\sqrt{\left(1-\frac{\lambda k^*}{(\alpha+1)}\right)^2 -\frac{ 4(d-1)(\lambda k)^2}{(\alpha+1)^2}}\right)
+ \frac{d-2}{2(d-1)} \frac{\lambda k^*}{\alpha+1}
 \right)
\]
If $\Phi^*(o,o|\lambda_w)>1$, then by Theorem \ref{cor:maximality}, $\lambda^*_s=\lambda^*_w<\lambda_w= (\alpha +1)/(kd+k^*)$.
We note that $\Phi^*(o,o|\lambda_w)$ is equal to
\[
\frac{\alpha +1}{\alpha_o +1} \cdot \left(  
\frac{d-d\sqrt{(1-\lambda k^*/(kd+k^*)^2-4(d-1) (\lambda k /(kd+k^*)^2}}{2(d-1)}+ \frac{d-2}{2(d-1)} \frac{\lambda k^*}{kd+k^*}
 \right)
\]
We know that for all $\alpha=\alpha_o$, we get the process $(\mathbb T_d,\mathcal R)$, which has a pure global survival phase.
If $\alpha_o=0$, then for every sufficiently large $\alpha$, the quantity $\Phi^*(o,o|\lambda_w)$ is larger than 1,
and there is no pure global survival phase.
On the other hand, once we have chosen such a large $\alpha$, there exist sufficiently small values $\alpha_o>0$ such that the above quantity is still larger than 1, and thus $\lambda_w^*=\lambda_s^*$.
This concludes the proof.
\end{proof}

In Examples \ref{ex:agelooptree} and  \ref{ex:homloops}, we see that we can take transitive BRWs, that have a pure global survival phase,
and perform a local modification (of the reproduction and/or of the ageing parameters), that lowers the critical parameters and removes the pure global survival phase.
It is natural to wonder if it is possible to take  a transitive BRW, which does not have the pure global survival phase,
and  perform local modifications on the reproduction and ageing parameters, to destroy this phase.
The answer is negative, as Theorem \ref{th:trans-mod} and Corollary \ref{cor:modquasitr} show.

\begin{Theorem}\label{th:trans-mod}
An irreducible ageing BRW with a pure global survival phase
cannot be a local modification of an irreducible ageing BRW with no pure global survival phase
and such that there is strong survival, for all values of $\lambda$, for which there is local survival with positive probability.
\end{Theorem}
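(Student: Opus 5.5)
Let $(X,\mathcal R)$ be the irreducible ageing BRW with a pure global survival phase, so $\lambda_w<\lambda_s$. Suppose, by contradiction, that it is a local modification of an irreducible ageing BRW $(X,\mathcal R^*)$ which has no pure global survival phase, so $\lambda_w^*=\lambda_s^*$, and in which, for every $\lambda$ with local survival, there is strong survival. The plan is to apply Theorem \ref{cor:maximality} with these roles and then show that the strong survival hypothesis cannot hold at some suitable value of $\lambda$.

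\emph{Step 1.} By Theorem \ref{cor:maximality}(1), $\lambda_w^*\le\lambda_w$. Since $\lambda_w^*=\lambda_s^*$, part (3) forces $\lambda_s^*\le\lambda_w$. Otherwise $\lambda_s^*>\lambda_w$ would give $\lambda_w^*=\lambda_w<\lambda_s^*$, which is a pure global survival phase for $(X,\mathcal R^*)$. Hence $\lambda_w^*=\lambda_s^*\le\lambda_w<\lambda_s$.

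\emph{Step 2.} Fix $\lambda\in(\lambda_w,\lambda_s)$. Then $\lambda>\lambda_s^*$, so $(X,\mathcal R^*)$ survives locally, and by hypothesis $\mathbf q^*(x,X)=\mathbf q^*(x,x)<1$ for every $x$ (irreducibility lets us pass between sites). For $(X,\mathcal R)$ at the same $\lambda$ we have $\mathbf q(x,x)=1>\mathbf q(x,X)$.

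\emph{Step 3.} Let $A$ be the finite set of sites where the two processes differ. Since $\lambda<\lambda_s$, the original process visits the finite set $A$ only finitely often almost surely: local extinction at one site propagates to every finite set by irreducibility. So with positive probability the original process survives globally while its descendants never return to $A$ after some time. To exploit this, pick a site $y$ and a realisation in which an individual is at $y$ and none of its descendants ever enters $A$, while the progeny survives globally. Such a $y$ must exist, otherwise global survival would force infinitely many visits to $A$. On that event the two processes have identical laws, because they agree outside $A$. So for the modified process starting at $y$, with positive probability there is global survival without ever visiting $A$, hence without local survival at sites of $A$. This gives $\mathbf q^*(y,X)<\mathbf q^*(y,a)$ for $a\in A$, contradicting strong survival.

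The main obstacle is making Step 3 rigorous. One must justify the existence of such a $y$: apply the strong Markov property to the discrete-time counterpart, using that with positive probability there are infinitely many individuals whose lines of descent avoid $A$. One must also check that strong survival, as used here, gives $\mathbf q^*(y,X)=\mathbf q^*(y,a)$ for every pair of sites. I would first try to obtain the latter from the hypothesis at each starting site together with the irreducibility identity $\mathbf q^*(y,a)=\mathbf q^*(y,y)$, and if necessary invoke the corresponding results in \cite{cf:BZ-critical26}.
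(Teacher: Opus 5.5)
Your proof is correct and follows the paper's skeleton. Both obtain $\lambda_s^*=\lambda_w^*\le\lambda_w$ from Theorem~\ref{cor:maximality}, fix $\lambda\in(\lambda_w,\lambda_s)$, and show that $(X,\mathcal R^*)$ survives locally but not strongly. In your Step 1 the appeal to part (3) is superfluous, since $\lambda_s^*=\lambda_w^*\le\lambda_w$ follows directly from part (1).

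The real difference is the transfer step. The paper cites a generalisation of \cite[Theorem 3]{cf:BZmathematics} from \cite{cf:BZ-critical26}, which turns $\mathbf{q}(x,X)<\mathbf{q}(x,A)$ into $\mathbf{q}^*(x,X)<\mathbf{q}^*(x,A)$ for every $x$. You prove the instance you need by hand:
\begin{itemize}
\item Since $\mathbf{q}(\cdot,A)\equiv 1$ at this $\lambda$, your K\"onig-type argument on the genealogical tree of the discrete-time counterpart applies. Together with the branching property and a countable union over generations and sites, it produces a site $y\notin A$ from which, with positive probability, the progeny survives globally and never enters $A$.
\item The counterpart measures agree off $A$ ($\mu_z=\mu^*_z$ for $z\notin A$), so this event has the same probability under both processes.
\item Hence $\mathbf{q}^*(y,X)<\mathbf{q}^*(y,A)=\mathbf{q}^*(y,y)<1$.
\end{itemize}
This route is elementary and self-contained. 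The same coupling also shows that $(X,\mathcal R^*)$ survives globally at every $\lambda\in(\lambda_w,\lambda_s)$. Hence $\lambda_w^*\le\lambda_w$, and you do not actually need Theorem~\ref{cor:maximality}.

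What the cited result gives beyond your argument is the strict inequality at every starting site, not just at one particular $y$. That matters only if the strong-survival hypothesis is read as holding from a single site. Even then you can recover it. Propagate the strict inequality backwards along edges through the fixed-point equations $\mathbf{q}^*(x,B)=\sum_f\mu^*_x(f)\prod_z\mathbf{q}^*(z,B)^{f(z)}$, for $B=X$ and $B=A$. This uses irreducibility and the positivity of $\mathbf{q}^*(z,X)$, which holds because an individual has no children with positive probability.
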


\begin{proof}
	Let $(X,\mathcal R)$ be a BRW with a pure global phase (i.e. $\lambda_w < \lambda_s $) and let $(X,\mathcal R^*)$ be one of
	  its local modifications, with critical parameters $\lambda_w^*=\lambda_s^*$. Let $A \subseteq X$ be the finite set such that $r_{xy} \equiv r^*_{xy}$ $\forall x \notin A, y \in X.$ 
	  Consider now $\lambda \in (\lambda_w, \lambda_s)$ and suppose that $\lambda^*_w = \lambda^*_s$. By 
	  Theorem \ref{cor:maximality},
	  we have $\lambda^*_w \leq \lambda_w$.
      Denote by $\mathbf{q}(\cdot,\cdot)$ and by $\mathbf{q}^*(\cdot,\cdot)$ the extinction probabilities of $(X,\mathcal R)$
	  and $(X,\mathcal R^*)$, respectively, and recall that these quantities depend on the choice of $\lambda$.
	  Fix $\lambda\in (\lambda_w,\lambda_s)$.
	   By the definition of critical parameters, for that choice of $\lambda$, $\mathbf{q}(x,X)<\mathbf{q}(x,x)=1$ holds for every $x\in X$. Since 	   $(X,\mathcal R)$ is irreducible, we have $\mathbf{q}(x,x)=\mathbf{q}(x,C)$, for all $x \in X$, and for all non-empty, finite $C\subseteq X$.
	   In particular, we can  take $C=A$ and note that $\mathbf{q}(x,x)=\mathbf{q}(x,A)$, for all $x\in X$.

   According to a generalisation of \cite[Theorem 3]{cf:BZmathematics}
   (see \cite{cf:BZ-critical26}),
	   from $ \mathbf{q}(x,X) < \mathbf{q}(x,A)$, we deduce that $\mathbf{q}^*(x,X)<\mathbf{q}^*(x,A)= \mathbf{q}^*(x,x)$ for all $x \in X$.
    Since  $\lambda > \lambda_w \geq \lambda^*_s$,
    $(X,\mathcal R^*)$ survives locally, with positive probability. In other words, $\mathbf{q}^*(x,x)<1$, for all $x\in X$. This implies that 
    for our choice of $\lambda$, $(X,\mathcal R^*)$ there is local survival, with positive probability, but no strong survival
    (that is, local and global survival have both positive probability, but these probabilities are different).
    This  completes the proof.
       \end{proof}
       
It is easy to prove that Theorem \ref{th:trans-mod} implies the following corollary.
\begin{Corollary}\label{cor:modquasitr}
An irreducible, ageing BRW with a pure global survival phase 
cannot be a local modification of an irreducible, quasi-transitive, ageing BRW, with no pure global survival phase.
\end{Corollary}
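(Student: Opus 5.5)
The plan is to reduce Corollary \ref{cor:modquasitr} to Theorem \ref{th:trans-mod}. The only thing to check is the extra hypothesis of that theorem for the unmodified process. So let $(X,\mathcal R^*)$ be an irreducible, quasi-transitive ageing BRW with $\lambda_w^*=\lambda_s^*$. I will show that, for every $\lambda$ at which $(X,\mathcal R^*)$ survives locally with positive probability, there is strong survival, i.e.\ $\mathbf q^*(x,X)=\mathbf q^*(x,x)<1$ for all $x$. Once this holds, Theorem \ref{th:trans-mod} applies verbatim. An irreducible ageing BRW $(X,\mathcal R)$ with a pure global survival phase that were a local modification of $(X,\mathcal R^*)$ would contradict that theorem, which gives the corollary.

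To establish strong survival, I will pass to the discrete-time counterpart of $(X,\mathcal R^*)$, given by Equation \eqref{eq:discrete-ctp-ageing}. It has the same extinction probabilities $\mathbf q^*(\cdot,\cdot)$ and the same first-moment matrix. Moreover, it is again quasi-transitive and irreducible, because the automorphisms that preserve the rate functions $r^*_{xy}$ also preserve the measures $\mu_x$.

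For quasi-transitive irreducible discrete-time BRWs, I will invoke the known result (see \cite{cf:BZ17}, and \cite{cf:BZ14-SLS} for the strong-survival discussion) that local survival implies strong local survival. The heuristic behind it is as follows. Let $X_0$ be the finite set of orbit representatives. On the event of global survival, infinitely many individuals are alive, and each of them, at bounded graph distance (via its orbit representative), has probability bounded below by some $\delta>0$ of generating a line of descent that survives locally at a site of $X_0$. A Borel--Cantelli (or L\'evy 0-1) argument then gives local survival almost surely on global survival, so $\mathbf q^*(x,x)=\mathbf q^*(x,X)$. Irreducibility transfers this from the sites of $X_0$ to every $x$.

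The main obstacle is making the uniform lower bound $\delta$ rigorous. Quasi-transitivity gives uniformity of the probability of local survival started from any site. However, that probability refers to survival at the starting site's own orbit representative, so I would combine it with irreducibility: from each of the finitely many representatives, a fixed target point is reachable with positive probability. I expect to simply cite this as the established fact for quasi-transitive BRWs rather than reprove it. Given that fact, the remaining step is to check that the hypotheses of Theorem \ref{th:trans-mod} match exactly, which is immediate.
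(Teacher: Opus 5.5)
Your proposal is correct and follows the paper's route: the paper likewise verifies the extra hypothesis of Theorem \ref{th:trans-mod} by citing \cite[Corollary 3.2]{cf:BZ14-SLS} (for an irreducible quasi-transitive BRW, either $\mathbf{q}(x,x)=1$ for all $x$ or $\mathbf{q}(x,x)=\mathbf{q}(x,X)<1$ for all $x$), and your passage to the discrete-time counterpart makes explicit why that result applies to ageing BRWs. Your heuristic for the uniform $\delta$ is not needed once you cite that corollary, and its "bounded graph distance" step is not literally true. The clean fix is that irreducibility gives $\mathbf{q}(y,x)=\mathbf{q}(y,y)=\mathbf{q}(\gamma(y),\gamma(y))$, which takes only finitely many values.
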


\begin{proof}
		By \cite[Corollary 3.2]{cf:BZ14-SLS}, we know that for an irreducible quasi-transitive BRW, either  $\mathbf{q}(x,x)=1$
		for all $x \in X$, or 
		$\mathbf{q}(x,x)= \mathbf{q}(x,X) < 1$
		 for all $x \in X$. 
		 This means that for all values of $\lambda$ such that there is local survival with positive probability, there is strong 
		 survival, and by Theorem \ref{th:trans-mod} we know that this BRW cannot be a local modification of a BRW with pure global survival phase.
	    \end{proof}
We summarize the main results of this section in Table \ref{tb:modif}.

\begin{table}[H]
\captionof{table}{The second BRW is a local modification of the first one. 
We call a site with a self-reproduction loop a \textit{hub}.
The parameters of the first process, 
$\lambda_w$ and $\lambda_s$, are compared with those of the second process, $\lambda_w^*$ and $\lambda_s^*$.
Smaller parameter values make survival more likely, whereas larger values increase the probability of extinction.
We provide instructions for the local modification and indicate where examples and proofs can be found. }\label{tb:modif}
\tabcolsep=0.3cm 
        \begin{tabular}{ ccc|c }
\hline
\hline
 \rowcolor{lightgray!50}
\textbf{1st process} & \textbf{2nd process} & \textbf{Instructions} & \textbf{Where } \\  \hline
transitive &  not transitive & add one hub  &\\
no hubs &   
 one hub & reduce ageing at hub &Example \ref{ex:agelooptree}
 \\ 
$\lambda_w<\lambda_s$ &  $\lambda_w^*=\lambda_s^*\le\lambda_w$ &  &\\
 \hline
not transitive &  transitive & reduce reprod. at hub  &\\
one hub &   
 no hubs & increase ageing at hub &Example \ref{ex:agelooptree}
 \\ 
$\lambda_w=\lambda_s$ &  $\lambda_w\le\lambda_w^*<\lambda_s^*$ &  &\\
 \hline
 transitive &  not transitive & reduce ageing  &\\
all sites are hubs &   
all sites are hubs & at one site &Example \ref{ex:homloops}
 \\ 
$\lambda_w<\lambda_s$ &  $\lambda_w^*=\lambda_s^*\le\lambda_w$ &  &\\
 \hline
not transitive &  transitive & increase ageing   &\\
all sites are hubs &   
 all sites are hubs  & at one site &Example \ref{ex:homloops}
 \\ 
$\lambda_w=\lambda_s$ &  $\lambda_w\le\lambda_w^*<\lambda_s^*$ &  &\\
 \hline
  transitive &  transitive or not &  	IMPOSSIBLE   & Corollary \ref{cor:modquasitr}\\
 $\lambda_w=\lambda_s$ &  $\lambda_w^*<\lambda_s^*$ &  &\\
 \hline
\hline
\end{tabular}

\end{table}


\section{The expected number of individuals in an ageing BRW}
\label{sec:expectation}
In the previous section, we have studied the critical parameters
of ageing BRWs, subjected to local modifications. 
We have exploited the fact that the long-term behaviour of BRWs coincides 
with the one of their
discrete-time counterparts and, in many cases, this 
behaviour depends only on the first moments of the offspring distributions.
In particular, in those cases, the ageing BRW survives with positive probability
(or goes extinct) if and only if the continuous-time BRW, with the same expectation
of the offspring distribution, does.

Nevertheless, we know that the laws of an ageing BRW and of a continuous-time (classical) BRW, are different. In this section, we investigate the expected number of individuals, alive in an ageing BRW and in a continuous-time BRW, as a function of time.
We recall that this expected value describes, with large probability, the behaviour of the system with a sufficiently large number
of initial individuals, thanks to Kurtz's limit theorems \cite{cf:Kurtz1, cf:Kurtz2}.
To keep things simple, we consider branching processes, rather than BRWs.

Let $S_t$ denote the expected number of individuals that are alive at time $t\ge0$, for a classical continuous-time branching process, 
with rate $\lambda$. It is well-known that $S_t$ satisfies the
differential equation $  \dot{S_t} = (\lambda -1) S_t $, where by $\dot{S_t}$
we mean the derivative of $S_t$, with respect to $t$
 (see, for example, \cite[Chapter III]{cf:AthNey}).
 Its solution is 
 $S_t = S_0 \cdot e^{(\lambda -1)t}$, 
 where $S_0$ is the initial number of individuals in the population.

Consider now the ageing branching process  with   $r (t)=e^{-\alpha t}$,
 for all $t\ge0$, $\alpha > 0$.
This means that the ability of each individual to reproduce decreases exponentially with time. 
The following result gives the expression of the expected number of individuals in this ageing branching process.
\begin{Theorem}\label{th:ode}
Fix $\alpha > 0$.
 Let  $(X,\mathcal R)$ be the ageing branching process, where $X=\{x\}$ and  $r (t)=e^{-\alpha t}$,
 for all $t\ge0$. Let $V_t$ be the expected number of individuals that are alive at time $t\ge0$.
 Then, for all fixed $\lambda>0$, $V_t$ is as follows.
 \begin{enumerate}
 \item
 If $\lambda\neq\alpha$,
$   V_t= V_0\left(\frac{\lambda}{\lambda-\alpha} e^{(\lambda-\alpha-1)t}-\frac{\alpha}{\lambda-\alpha} e^{-t}\right)$.   
\item
If  $\lambda=\alpha$, then $ V_t= V_0(1+\lambda t) e^{-t}$.
\end{enumerate}
\end{Theorem}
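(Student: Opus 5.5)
The plan is to derive a closed linear renewal-type equation for the expected birth rate, solve it explicitly, and then recover $V_t$ by integrating against the survival probability $e^{-a}$. By superposition (linearity of expectation and independence of different individuals), it suffices to treat $V_0=1$, with the initial individual born at time $0$, i.e.\ of age $0$; the general case follows by multiplying by $V_0$. Let $B(t)$ denote the expected rate of births at time $t$, that is, $\mathbb E[\#\{\text{births in }[0,t]\}]=\int_0^t B(s)\,ds$.

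\textbf{Step 1: renewal equation for $B$.} Consider an individual born at time $s$. Its death and breeding clocks are independent, and its breeding is an inhomogeneous Poisson process of intensity $\lambda e^{-\alpha(t-s)}$. By the Campbell formula, conditioning on survival (which has probability $e^{-(t-s)}$ at time $t$), this individual contributes expected birth rate $\lambda e^{-(1+\alpha)(t-s)}$ at time $t$. Summing over the initial individual and over all individuals born in $[0,t]$ (a many-to-one / linearity argument) gives
\[
B(t)=\lambda e^{-(1+\alpha)t}+\lambda\int_0^t B(s)\,e^{-(1+\alpha)(t-s)}\,ds .
\]
I expect this step to be the main obstacle. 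The process is not Markov, so the equation must be justified through the genealogy rather than through a generator. I would condition on the birth time of each individual, use the independence of its clocks from those of its ancestors, and apply Fubini; finiteness of all expectations is guaranteed by stochastic domination by a classical branching process with rate $\lambda$.

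\textbf{Step 2: solve for $B$.} The kernel is exponential, so multiplying by $e^{(1+\alpha)t}$ and differentiating turns the integral equation into $\dot B=(\lambda-\alpha-1)B$ with $B(0)=\lambda$. Hence
\[
B(t)=\lambda e^{(\lambda-\alpha-1)t}.
\]
Equivalently, one can solve the renewal equation with Laplace transforms.

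\textbf{Step 3: recover $V_t$.} An individual alive at time $t$ is either the initial one, still alive with probability $e^{-t}$, or was born at some $s\le t$ and survived, with probability $e^{-(t-s)}$. Therefore
\[
V_t=e^{-t}+\int_0^t B(s)\,e^{-(t-s)}\,ds=e^{-t}\Bigl(1+\lambda\int_0^t e^{(\lambda-\alpha)s}\,ds\Bigr).
\]
If $\lambda\neq\alpha$, the integral equals $(e^{(\lambda-\alpha)t}-1)/(\lambda-\alpha)$. Rearranging gives $\frac{\lambda}{\lambda-\alpha}e^{(\lambda-\alpha-1)t}-\frac{\alpha}{\lambda-\alpha}e^{-t}$, which is case (1). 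If $\lambda=\alpha$, the integral equals $t$, giving $(1+\lambda t)e^{-t}$, which is case (2). As a sanity check, both formulas satisfy $V_0=1$.
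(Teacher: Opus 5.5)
Your proof is correct and takes essentially the paper's route: the same survival decomposition $V_t = V_0e^{-t}+\int_0^t \dot N_s e^{-(t-s)}\,ds$ and the same renewal equation for the birth rate $B=\dot N$. The only difference is the final step: you solve the renewal equation directly, getting $B(t)=\lambda e^{(\lambda-\alpha-1)t}$, and then integrate. The paper instead eliminates $\dot N$ to obtain a second-order ODE for $V_t$ with $\dot V_0=(\lambda-1)V_0$ and checks that the stated formulas solve it; your version is slightly more direct and avoids that detour.
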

\begin{proof}
 Recalling that each individual has an exponentially distributed lifetime of parameter 1, the expected number of offspring,
produced up to time $t$ by an individual born at time 0, is given by
\[
\int_{0}^{t} e^{-x} \int_{0}^{x} {\lambda e^{-\alpha s}\,ds} + 
\int_{t}^{+\infty} {e^{-x}} \int_{0}^{t} {\lambda e^{-\alpha s}\,ds}.
\]

Let $V_t$ denote the expected number of individuals alive at time $t \geq 0$, and let $N_t$ be the total
number of individuals born up to time $t$. Clearly $N_0 = V_0$, since at time $0$ the only individuals present are the initial ones.  
Moreover, $\dot{N}_s\,ds$ is the expected number of births in the infinitesimal time interval $ds$.

The number $V_t$
 can be decomposed into the number of initial individuals, that are still alive, plus the number of individuals that were born at time $s>0$ and
 are still alive. 
Each of the initial individuals $V_0$ has an exponential lifetime with parameter $1$, hence survives until time $t$ with probability 
$e^{-t}$. 
This yields a contribution 
$
V_0\, e^{-t}$.
On the other hand, consider an individual born at time $s \in [0,t]$. 
They survive at least until time $t$ with probability $e^{-(t-s)}$. 
The expected number of individuals born in $(0,t)$, still alive
at time $t$, is equal to
$
\int_0^t \dot{N}_s\, e^{-(t-s)} \, ds
$.
Adding these two contributions, we obtain 
\begin{equation}
\label{eq:Vt}
	V_t = V_0 \, e^{-t} + \int_0^t \dot{N}_s\, e^{-(t-s)} \, ds.
\end{equation}
We compute $\dot{N}_t$, separating the contribution of 
individuals alive at time 0, from the one of individuals born at time $s>0$: 
\begin{equation}\label{eq:N.t}
	\dot{N}_t = \lambda V_0\, e^{-(\alpha+1)t} 
    + \lambda\int_{0}^{t} { \dot{N}_s\,  e^{-(\alpha+1)(t-s)}\, ds}.
\end{equation}
We note that in the first summand we take into account
that the initial population $N_0 = V_0$ reproduces with rate $\lambda e^{-\alpha t}$, provided it has survived up to time $t$ (which occurs with probability $e^{-t}$). 
The second summand represents the contribution of all individuals born at times $s\in (0,t)$: 
each individual reproduces, at time $t$, with a rate $\lambda e^{-\alpha (t-s)}$, provided that it has survived up to an age $(t-s)$, 
an event that occurs with probability $e^{-(t-s)}$.  

\noindent Now, we differentiate the two members of Equation \eqref{eq:Vt}, with respect to $t$.
\begin{equation}\label{eq:V.t}
   \begin{split}
	\dot{V_t}
	&= \frac{d}{dt}\big(V_0\, e^{-t}\big)
	+ \frac{d}{dt}\left(\int_0^t \dot{N}_s\,e^{-(t-s)}\,ds\right)\\
	&= -V_0\, e^{-t}
	+ \dot{N}_t
	+ \int_0^t \dot{N}_s\,\frac{d}{d t}\big(e^{-(t-s)}\big)\,ds\\
	&= -V_0\, e^{-t}
	+ \dot{N}_t
	- \int_0^t \dot{N}_s\,e^{-(t-s)}\,ds.
\end{split} 
\end{equation}
From Equations \eqref{eq:Vt} and \eqref{eq:V.t}, we get
$\dot{V}_t = \dot{N}_t - V_t$ and
\begin{equation}\label{eq:N.1}
\dot{N}_t = \dot{V}_t + V_t.
 \end{equation}
Therefore, Equation \eqref{eq:N.t} can be rewritten as
\begin{equation}\label{eq:N.2}
 \dot{N}_t = \lambda V_0 e^{-(\alpha+1)t} + \lambda
 \int_{0}^{t} (\dot{V}_s + V_s) \,  \, e^{-(\alpha+1)(t-s)} \, ds.   
\end{equation}
If we differentiate Equation \eqref{eq:N.1}, we get
\begin{equation}\label{eq:N2.1}
  \ddot{N}_t = \ddot{V}_t +\dot{V}_t; 
\end{equation}
while differentiating Equation \eqref{eq:N.2} gives
\begin{equation}\label{eq:N2.2}
\begin{split}
   \ddot{N}_t &=
-(\alpha+1) \lambda V_0 e^{-(\alpha+1)t} 
	+ \lambda (\dot{V}_t + V_t)  
	- \lambda \int_0^t (\dot{V}_s + V_s) \, (\alpha+1) e^{-(\alpha+1)(t-s)} \, ds\\
    &
    =-(\alpha+1)\dot{N}_t+ \lambda(\dot{V}_t + V_t)
    =(\lambda - \alpha - 1)(\dot{V}_t + V_t).
\end{split}
\end{equation}
From Equation \eqref{eq:N2.1}, we have $\ddot{V}_t = \ddot{N}_t -\dot{V}_t$.
Together with Equation \eqref{eq:N2.2}, this yields the following differential 
equations (where $V_0$ is the number of initial individuals):
\begin{equation}\label{eq:ODE}
	\begin{cases}
		\ddot{V}_t = \dot{V}_t(\lambda -\alpha-2) +V_t(\lambda -\alpha-1);\\
		\dot{V}_0 = (\lambda -1)V_0.
	\end{cases}
\end{equation}
It is easy to check that the two cases in the statement of Theorem \ref{th:ode} are the solutions of 
\eqref{eq:ODE}.
\end{proof}
We want to compare the function $V_t$ of the ageing branching process in Theorem \ref{th:ode}
with the function $S_t$ of a continuous-time branching process that has
the same first moment. To this aim, in the following definition, we define a branching process
\textit{equivalent} to $(X,\mathcal R)$.
The term is justified by the facts that the two processes share the same 
destiny (they both survive with positive probability or they both go almost
surely extinct) and that they have the same expected number of offspring.
\begin{Definition}
    Fix $\alpha, \lambda>0$ and let $(X,\mathcal R)$
    be the ageing branching process defined in Theorem \ref{th:ode}.
    The \textsl{equivalent} branching process is the continuous-time branching process, where individuals breed at the arrival times of a
    Poisson process of rate $\lambda/(1+\alpha)$ and die at rate 1. 
\end{Definition}
We are now ready to state the following result, which compares the expectations of the number of individuals in these two processes.
The proof is a simple consequence of the expressions
$S_t=S_0\cdot e^{(\lambda /(\alpha+1)-1)t}$ and Theorem \ref{th:ode} for $V_t$.
\begin{Theorem}\label{th:compare}
  Fix $\alpha, \lambda>0$ and let  $(X,\mathcal R)$ be the ageing branching process, where  $r(t)=e^{-\alpha t}$,
 for all $t\ge0$ ($X=\{x\}$). Let $(X,\lambda/(1+\alpha))$ be its equivalent branching process. Denote by $V_t$ and 
 $S_t$ the expected number of individuals who are alive at time $t\ge0$, in $(X,\mathcal R)$ and in $(X,\lambda/(1+\alpha))$,
 respectively. Suppose that $V_0=S_0$. As $t\to\infty$, the possible cases are the following.
 \begin{enumerate}
     \item If $\lambda>\alpha+1$, then both $V_t$ and $S_t$ tend to infinity and
     $V_t\sim\frac\lambda{\lambda-\alpha}V_0 e^{(\lambda-\alpha-1)t}$ is eventually larger than $S_t$.
     \item If $\lambda=\alpha+1$, then $S_t=V_0$, for all $t\ge0$, and $V_t\to V_0\lambda $.
     \item If $\alpha<\lambda<\alpha+1$, then both $V_t$ and $S_t$ tend to 0,  $V_t\sim\frac\lambda{\lambda-\alpha}V_0 e^{(\lambda-\alpha-1)t}$
      and $S_t$ is eventually larger than $V_t$.
      \item If $\lambda=\alpha$, then both $V_t$ and $S_t$ tend to 0,  $V_t\sim\alpha t V_0 e^{-t}$
      and $S_t$ is eventually larger than $V_t$.
       \item If $\lambda<\alpha$,  then both $V_t$ and $S_t$ tend to 0,  $V_t\sim\frac\alpha{\alpha-\lambda}V_0 e^{-t}$
      and $S_t$ is eventually larger than $V_t$.
 \end{enumerate}
\end{Theorem}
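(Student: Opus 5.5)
The plan is to compare the explicit formula for $V_t$ from Theorem \ref{th:ode} with $S_t=V_0e^{(\lambda/(\alpha+1)-1)t}$, case by case in $\lambda$. In every case the dominant exponential rate of $V_t$ is set against the rate $\beta:=\lambda/(\alpha+1)-1$ of $S_t$.

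I would first identify the leading term of $V_t$. When $\lambda\neq\alpha$ we have $V_t=V_0\big(\frac{\lambda}{\lambda-\alpha}e^{(\lambda-\alpha-1)t}-\frac{\alpha}{\lambda-\alpha}e^{-t}\big)$.
\begin{itemize}
\item If $\lambda>\alpha$, then $\lambda-\alpha-1>-1$. The first exponential dominates, so $V_t\sim\frac{\lambda}{\lambda-\alpha}V_0e^{(\lambda-\alpha-1)t}$, and the coefficient is positive.
\item If $\lambda<\alpha$, then $\lambda-\alpha-1<-1$. The second term dominates and $V_t\sim\frac{\alpha}{\alpha-\lambda}V_0e^{-t}$.
\item If $\lambda=\alpha$, then $V_t=V_0(1+\alpha t)e^{-t}\sim\alpha tV_0e^{-t}$.
\end{itemize}
In the case $\lambda=\alpha+1$ the first exponent vanishes, and directly $V_t=V_0(\lambda-\alpha e^{-t})\to\lambda V_0$. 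In that case $\beta=0$, so $S_t\equiv V_0$.

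Next I would compare the rates. The key algebraic fact is
\[
(\lambda-\alpha-1)-\beta=(\lambda-\alpha-1)-\frac{\lambda-\alpha-1}{\alpha+1}=\frac{\alpha(\lambda-\alpha-1)}{\alpha+1}.
\]
Since $\alpha>0$, this has the sign of $\lambda-\alpha-1$.
\begin{itemize}
\item For $\lambda>\alpha+1$, both rates are positive, so both $V_t$ and $S_t$ tend to infinity. The rate of $V_t$ is strictly larger, so $V_t/S_t\to\infty$ and $V_t$ is eventually larger.
\item For $\alpha<\lambda<\alpha+1$, both rates are negative, so both tend to $0$. Now $\beta>\lambda-\alpha-1$, so $S_t/V_t\to\infty$.
\item For $\lambda\le\alpha$, the decay of $V_t$ is $e^{-t}$, possibly times $t$. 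Meanwhile $\beta=\lambda/(\alpha+1)-1>-1$ because $\lambda>0$. Hence $S_t$ decays strictly slower exponentially, which beats the polynomial factor $t$, so $S_t$ is eventually larger.
\end{itemize}

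The only mild obstacle is bookkeeping: checking the signs of the coefficients so that the asymptotic equivalences are genuine. In particular $\frac{\lambda}{\lambda-\alpha}>0$ for $\lambda>\alpha$, and $\frac{\alpha}{\alpha-\lambda}>0$ for $\lambda<\alpha$. I also need $\beta>-1$ in the last cases, which holds since $\lambda>0$. Once these are in place, "eventually larger" follows from the ratio limits.
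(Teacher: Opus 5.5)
Your proposal is correct and takes the paper's route. The paper says only that the result is a simple consequence of comparing $S_t=S_0e^{(\lambda/(\alpha+1)-1)t}$ with the explicit formula for $V_t$ in Theorem \ref{th:ode}, and your case analysis together with the identity $(\lambda-\alpha-1)-\beta=\alpha(\lambda-\alpha-1)/(\alpha+1)$ fills in exactly those details. The one point worth stating explicitly is that $\beta<0$ when $\lambda\le\alpha$ (because $\lambda<\alpha+1$), which is what gives $S_t\to 0$ in cases 4 and 5.
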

We note that, while the qualitative asymptotic behaviour of $V_t$ and $S_t$
are the same (diverging, converging to a finite positive limit or to 0), the
quantitative behaviour differs. Moreover, if $\lambda\in(1,\alpha+1)$,
$S_t$ decreases for all $t\ge0$ and $V_t$ increases for small values of $t$.
As an exemplification of the possible cases, see Figure \ref{fig:quattro_grafici}. We note that cases 4. and 5. are quite similar,
therefore the last one is not displayed in this figure.
It is worth noting that, whenever $\lambda>1$, for small values of $t$, $V_t$ is increasing (this is due to the initial condition in
Equation \eqref{eq:ODE}). Even if here we observe this phenomenon for the case of ageing  branching processes,
we mention that it can be observed also for ageing BRWs with more than one site (see \cite{cf:tesiElena}).



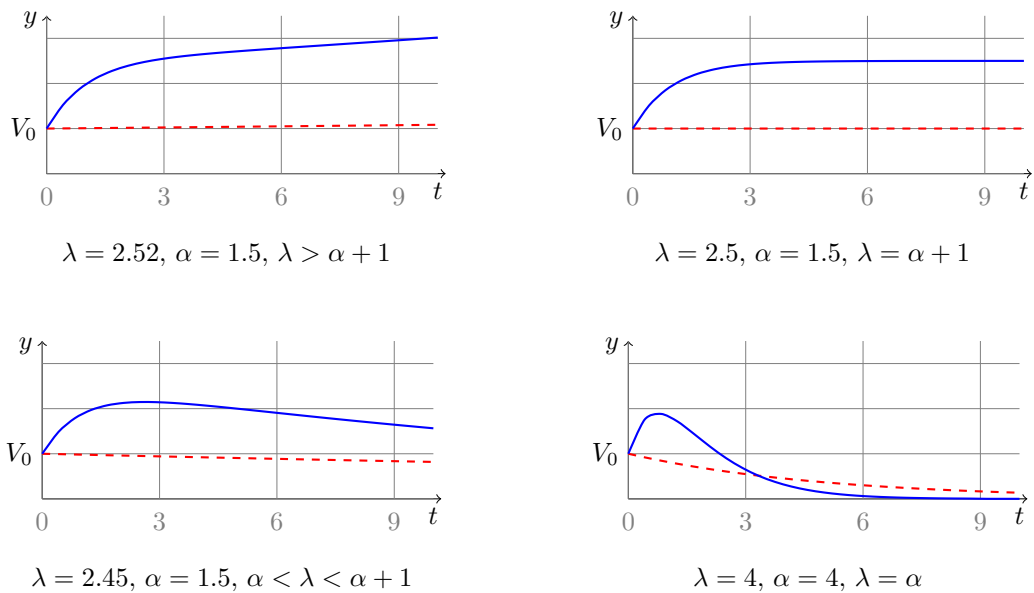
\begin{figure}[H]
	\centering
	\begin{minipage}{0.45\textwidth}
		\centering
		\begin{tikzpicture}[xscale=0.52,yscale=0.6]
			\draw[->] (0,0) -- (10.2,0);
			\draw[->] (0,0) -- (0,3.5);
			
			\node[left] at (0,1) {$V_0$};
			\node[left] at (0,3.4) {$y$};
			\node[below] at (10,0) {$t$};
			
			\foreach \i in {0,3,6,9} {
				\draw [very thin,gray] (\i,0) -- (\i,3.5) node [below=2pt] at (\i,0) {$\i$};
			}
			\foreach \i in {0,1,2,3} {
				\draw [very thin,gray] (0,\i) -- (10,\i);
			}
			
			\draw[domain=0:10, dashed, red, thick] plot(\x,{exp((2.52/2.5-1)*\x)});
			
			\draw[domain=0:10, blue, thick, smooth] plot(\x,{
				2.52*exp((2.52-2.5)*\x)/(2.52-1.5)-1.5*exp(-\x)/(2.52-1.5)
			});
		\end{tikzpicture}
		
		\vspace{0.2cm}
		\centering
		{$\lambda=2.52$, $\alpha=1.5$, $\lambda>\alpha+1$}
	\end{minipage}
	\hspace{0.7cm}
	\begin{minipage}{0.45\textwidth}
		\centering
		\begin{tikzpicture}[xscale=0.52,yscale=0.6]
			\draw[->] (0,0) -- (10.2,0);
			\draw[->] (0,0) -- (0,3.5);
			
			\node[left] at (0,1) {$V_0$};
			\node[left] at (0,3.4) {$y$};
			\node[below] at (10,0) {$t$};
			
			\foreach \i in {0,3,6,9} {
				\draw [very thin,gray] (\i,0) -- (\i,3.5) node [below=2pt] at (\i,0) {$\i$};
			}
			\foreach \i in {0,1,2,3} {
				\draw [very thin,gray] (0,\i) -- (10,\i);
			}
			
			\draw[domain=0:10, dashed, red, thick] plot(\x,1);
			
			\draw[domain=0:10, blue, thick, smooth] plot(\x,{
				2.5-1.5*exp(-\x)
			});
		\end{tikzpicture}
		
		\vspace{0.2cm}
		\centering
		{$\lambda=2.5$, $\alpha=1.5$, $\lambda=\alpha+1$}
	\end{minipage}
	
	\vspace{0.8cm}
	
	\begin{minipage}{0.45\textwidth}
		\centering
		\begin{tikzpicture}[xscale=0.52,yscale=0.6]
			\draw[->] (0,0) -- (10.2,0);
			\draw[->] (0,0) -- (0,3.5);
			
			\node[left] at (0,1) {$V_0$};
			\node[left] at (0,3.4) {$y$};
			\node[below] at (10,0) {$t$};
			
			\foreach \i in {0,3,6,9} {
				\draw [very thin,gray] (\i,0) -- (\i,3.5) node [below=2pt] at (\i,0) {$\i$};
			}
			\foreach \i in {0,1,2,3} {
				\draw [very thin,gray] (0,\i) -- (10,\i);
			}
			
			\draw[domain=0:10, dashed, red, thick] plot(\x,{exp((2.45/2.5-1)*\x)});
			
			\draw[domain=0:10, blue, thick, smooth] plot(\x,{
				2.45*exp((2.45-2.5)*\x)/(2.45-1.5)-1.5*exp(-\x)/(2.45-1.5)
			});
		\end{tikzpicture}
		
		\vspace{0.2cm}
		\centering
		{$\lambda=2.45$, $\alpha=1.5$, $\alpha<\lambda<\alpha+1$}
	\end{minipage}
	\hspace{0.7cm}
	\begin{minipage}{0.45\textwidth}
		\centering
		\begin{tikzpicture}[xscale=0.52,yscale=0.6]
			\draw[->] (0,0) -- (10.2,0);
			\draw[->] (0,0) -- (0,3.5);
			
			\node[left] at (0,1) {$V_0$};
			\node[left] at (0,3.4) {$y$};
			\node[below] at (10,0) {$t$};
			
			\foreach \i in {0,3,6,9} {
				\draw [very thin,gray] (\i,0) -- (\i,3.5) node [below=2pt] at (\i,0) {$\i$};
			}
			\foreach \i in {0,1,2,3} {
				\draw [very thin,gray] (0,\i) -- (10,\i);
			}
			
			\draw[domain=0:10, dashed, red, thick] plot(\x,{exp((-1/5)*\x)});
			
			\draw[domain=0:10, blue, thick, smooth] plot(\x,{
				(1+4*\x)*exp(-\x)
			});
		\end{tikzpicture}
		
		\vspace{0.2cm}
		\centering
		 {$\lambda=4$, $\alpha=4$, $\lambda=\alpha$}
	\end{minipage}
	\vspace{0.3cm}
	\caption{The expected value of individuals of the ageing branching process with breeding intensity $\lambda e^{-\alpha t}$ (blue, solid line) and of the continuous-time branching process with intensity $\lambda/(1+\alpha)$ (red, dashed line), as functions of the time variable $t$.}
	\label{fig:quattro_grafici}
\end{figure}

\section{Conclusions}
\label{sec:concl}

Branching random walks
 are  models for the number of individuals in a population that breeds and dies,
and for the number of infected individuals during an epidemic outbreak.
In the classical case, the behaviour of the process depends on its structure (the reproduction rates across sites)
and on the intensity parameter $\lambda$. 
In this study we introduce an ageing function, which takes into account the natural decline of fertility of individuals, as they age.
In contagious diseases, this decline represents the fact that transmissibility is at its peak when the individual has been recently infected, and decreases
over time. In particular, we focus on the case of exponential decay of the fertility/transmissibility rate, parametrised by the parameter 
$\alpha$.

In real situations one aims either at survival (protecting endangered species, for example) or at extinction (epidemic or pest control).
Increasing or decreasing the intensity $\lambda$ would seem to be the most natural option, but it might be impractical, since that would
require a global change of the characteristics of the population. It is therefore important to understand if and how local changes affect survival.
Moreover, when the model space is infinite, it is possible to observe local extinction (the population disappears from any finite set),
while there is global survival (the population persists as a whole). 
 Thus, the observation of the process on a finite set
provides exhaustive information on the asymptotic behaviour only if there is no pure global survival phase, that is, when
$\lambda_w=\lambda_s$. Indeed, when $\lambda_w<\lambda<\lambda_s$, one might think that the population is destined to extinction,
which is true from a local point of view, but local changes may provide  local survival, for the same $\lambda$ (see Table \ref{tb:modif}).

For endangered species, given a $\lambda$  such that there is local or global extinction,
 increasing the local reproduction rate and reducing the fertility decay rate, at one single location, can put
 the populaton in the range of local survival. On the other hand, if the population is sustained by a local environment where
 reproduction is fast and ageing slow, disrupting this ecosystem might lead to extinction.
 
 On the other hand, in disease (or pest) control, we may have an infectious disease that is destined to disappear, but it could turn into an epidemic outbreak if a community increases the frequency of contacts
 (higher reproduction rates) and/or lowers the speed of decay of transmissibility. On the contrary, a disease which persists thanks to
 some hubs, that is, sites with faster reproduction rates and/or slower ageing, can disappear once we isolate and treat those hubs.
 
 The importance of hubs, or hot spots, has been pointed out by earlier works on mathematical epidemic modelling, both from a
 theoretical point of view,  \cite{cf:Barabasi, cf:volz08} and with numerical simulations, \cite{cf:buzna06, cf:okabe21}.
   Our study confirms this importance: the treatment or creation of hubs can change the ultimate fate of a population or of an epidemic.
 
 In classical branching processes, the expected number $V_t$ of individuals alive at time $t$, can show only three behaviors:
 it increases for all $t\ge0$ (supercritical case), it remains constant (critical case), or it decreases for all $t\ge0$ (subcritical case).
 The introduction of an ageing parameter enriches the landscape of possibilities. We can compare the behaviour of $V_t$ with
 that of the expected number $S_t$ of individuals in a process with the same expected number of offspring, but with no ageing. 
 In the supercritical cases $V_t$ increases for all $t\ge0$, tends to infinity and grows faster than $S_t$; in the critical case $V_t$ increases and tends to a constant limit.
 In the subcritical case, $V_t$ tends to 0 and is eventually smaller than $S_t$, but there are cases (namely when $\lambda>1$) where
 $V_t$ exhibits an initial increase over time.
This phenomenon, absent if we do not add ageing in the model, underlines the importance of observation 
of populations for a sufficiently large amount of time: a temporary increase of the population is possible, even if
the ultimate fate is extinction.

Future work could focus on the short-term behaviour of ageing processes with multiple sites, and on both short and long-term behaviour
 of ageing BRW with constraints on the number of individuals in some/all sites. This could be investigated both through simulations
 and the theoretical study of models with a small number of sites.


\begin{thebibliography}{999}


%
%
%

%

%
%
%
%
%

%
%
%
%
%
%
%
%




%
%

\bibitem{cf:AthNey}
Athreya, K.B.; Ney, P.E.
\textit{Branching Processes}.
Die Grundlehren der mathematischen Wissenschaften, 196;
Springer: {Berlin/Heidelberg, Germany}, 1972.

\bibitem{cf:ath-73}
      Athreya, K.; Ney, P.
Limit theorems for the means of branching random walks.
In Transactions of the Sixth Prague Conference on Information Theory, 
Statistical Decision Functions, Random Processes, Prague, 1973,  63--72.

\bibitem{cf:asm76-1}
Asmussen, S.; Kaplan, N.
Branching random walks I.
{\em Stochastic Process. Appl.} {\bf 1976}, {\em 4}, 1--13.

%

 
\bibitem{cf:BZ2}
Bertacchi, D.; Zucca, F.
Characterization of the critical values of branching random walks on
weighted graphs through infinite-type branching processes.
{\em J.~Stat.~Phys.}  {\bf 2009}, {\em{134}}, 53--65.


\bibitem{cf:BZ14-SLS}
Bertacchi, D.; Zucca, F.
Strong local survival of branching random walks is not monotone.
{\em Adv.~Appl.~Probab.}  {\bf 2014}, {\em{46}},  400--421.


\bibitem{cf:BZ17}
Bertacchi, D.; Zucca, F.
A generating function approach to branching random walks.
{\em Braz.~J.~Probab.~Stat.}  {\bf 2017}, {\em{31}}, 229--253.

\bibitem{cf:BZmathematics}
Bertacchi, D.; Zucca, F.
On the critical parameters of branching random walks.
{\em Mathematics}  {\bf 2025}, {\em{13}}, 2962.

\bibitem{cf:BZ-critical26}
Bertacchi, D.; Zucca, F.
Critical parameters of germ-monotone families of branching random walks.
{\em Preprint}  {\bf 2026}, arXiv:2602.21062 .


 \bibitem{cf:buzna06}
Buzna L., Peters K., Helbing D.
Modelling the dynamics of disaster spreading in networks.
{\em Physica A: Statistical Mechanics and its Applications}
{\bf 2006}, {\em 363}, 132--140.

\bibitem{cf:daipra2}
Cerf R., Dai Pra P., Formentin M., Tovazzi D.
Rhythmic behavior of an Ising Model with dissipation at low temperature.
{\em ALEA} {\bf 2021}, {\em 18}, 439--467.

\bibitem{cf:Clancy98}
Clancy, D.; O'Neill, P.
Approximation of epidemics by inhomogeneous birth-and-death processes.
{\em Stochastic Process. Appl.}  {\bf 1998}, {\em 73}, 233--245.

\bibitem{cf:daipra1}
Collet F., Dai Pra P., Formentin M.,
Collective periodicity in mean-field models of cooperative behavior.
{\em Nonlinear Differential Equations Appl.} {\bf 2015}, {\em 22}, 1461--1482.

\bibitem{cf:daipra3}
Dai Pra P., Marini E.
Noise-induced oscillations for the mean-field dissipative contact process.
{\em El.~J.~Probab.} {\bf 2025}, {\em 30}, 98.

\bibitem{cf:Barabasi}
Dezso Z.; Barabási A.-L.
Halting viruses in scale-free networks.
{\em Phys. Rev. E} {\bf 2002}, {\em 65}, 1--4.

	
\bibitem{cf:feller59}
Feller, W.
The birth and death processes as diffusion processes.
{\em  J.~Math.~Pures Appl.} {\bf 1959} {\em 9}, 301--345.


\bibitem{cf:GW1875}
Galton, F.; Watson, H.W.
On the probability of the extinction of
families.
{\em J.~Anthropol.~Inst.~Great Br.~Irel.}
 {\bf 1875}, {\em{4}}, 138--144.
 
\bibitem{cf:Griff73}
Griffiths, D. A.
Multivariate birth-and-death processes as approximations to epidemic processes.
{\em J.~Appl.~Probab.}  {\bf 1973}, {\em{10}}, 15--26.

\bibitem{cf:Harris63}
Harris, T.E. 
\textit{The Theory of Branching Processes}; Springer: {Berlin/Heidelberg, Germany}, 1963.

\bibitem{cf:joffe73}
Joffe, A.; Moncayo, A. R.
Random variables, trees, and branching random walks.
{\em Advances in Math.} {\bf 1973}, {\em 10}, 401--416.

\bibitem{cf:asm76-3}
Kaplan, N.; Asmussen, S.
Branching random walks II.
{\em Stoch.~Proc.~Appl.} {\bf 1976}, {\em 4}, 15--31.

\bibitem{cf:kendall60}
Kendall, David G.
Birth-and-death processes, and the theory of carcinogenesis,
{\em Biometrika} {\bf 1960} {\em 47}, 13--21.


\bibitem{cf:Kurtz1}
Kurtz, T.G.
Solutions of ordinary differential equations as limits of pure jump Markov processes, 
{\em J.~Appl.~Probab.} {\bf 1970}, {\em 7}, 49--58.

\bibitem{cf:Kurtz2}
Kurtz, T.G., 1971.
Limit theorems for sequences of jump Markov processes approximating ordinary differential processes, 
{\em J.~Appl.~Probab.} {\bf 1971}, {\em 8}, 344--356.


\bibitem{cf:Ligg1}
Liggett, T.M.
{Branching random walks and contact processes on homogeneous trees}.
{\em Probab.~Theory Relat. Fields}  {\bf 1996}, {\em{106}}, 495--519.

\bibitem{cf:Ligg2}
Liggett, T.M.
{Branching random walks on finite trees}.
In {\em Perplexing problems in probability, Progr. Probab.}
{\bf{44}}, Birkh\"auser Boston, Boston, MA, 1999, 315-330.

\bibitem{cf:MachadoMenshikovPopov}
Machado, F.P.; Menshikov, M.V.; Popov, S.
Recurrence and transience of multitype branching random walks.
{\em Stoch.~Proc.~Appl.}  {\bf 2001}, {\em{91}},  21--37.



\bibitem{cf:MP03}
Machado, F.P.; Popov, S.
Branching random walk in random environment on trees.
{\em Stoch.~Proc.~Appl.}  {\bf 2003}, {\em{106}},  95--106.


\bibitem{cf:MadrasSchi}
Madras, N.; Schinazi, R. 
Branching random walks on trees.
{\em Stoch.~Proc.~Appl.}  {\bf 1992}, {\em{42}},  255--267.
 
 

\bibitem{cf:tesiElena}
E.~Montanaro, 
Stochastic models for infections.
PhD Thesis (in progress),  2026, Università La Sapienza, Roma.

  
  \bibitem{cf:Muller08-2}
M\"uller, S.
Recurrence for branching Markov chains.
{\em Electron.~Commun.~Probab.} {\bf 2008}, {\em 13}, 576--605.

 \bibitem{cf:okabe21}
Okabe, Y.; Shudo, A. 
Microscopic Numerical Simulations of Epidemic Models on Networks. 
{\em{Mathematics}} \textbf{2021}, {\em 9}, 932.


%
%

\bibitem{cf:PemStac1}
Pemantle, R.; Stacey, A.M.
The branching random walk and
contact process on Galton--Watson and nonhomogeneous trees.
{\em Ann.~Probab.}  {\bf 2001}, {\em{29}},  1563--1590.



\bibitem{cf:Stacey03}
Stacey, A.M.
Branching random walks on quasi-transitive graphs.
{\em Combin.~Probab.~Comput.} {\bf 2003}, {\em 12}, 345--358.


 \bibitem{cf:volz08}
Volz E.
SIR dynamics in random networks with heterogeneous connectivity.
(2008) 
{\em J.~Math.~Biology} {\bf 2008}, {\em 56}, 293--310.

    
\bibitem{cf:Woess09}
         Woess, W.
         \textit{Denumerable Markov Chains,
 Generating Functions, Boundary Theory, Random Walks on Trees};
 EMS Textbooks in Mathematics,
 European Mathematical Society (EMS): Z\"urich, Switzerland, 2009.

 


\end{thebibliography}
\end{document}